\newcommand{\qed}{$\;\;\;\Box$}
\newenvironment{proof}{\par\smallbreak{\sl Proof.~}}
{\unskip\nobreak\hfill \qed \par\medbreak}
\newcounter{claim}
\renewcommand{\theclaim}{\arabic{claim}}
\newenvironment{claim}{\refstepcounter{claim}%
	\par\medskip\par\noindent{\bf Claim~\theclaim.}\rm}%
{\par\medskip\par}
\newenvironment{subproof}{\par\noindent{\bf Proof of Claim.}}%
{\qed\par\smallbreak}
\newcommand{\hide}[1]{}
\newcommand{\bbbn}{{\mathbb N}}
\newcommand{\bbbr}{{\mathbb R}}
\newcommand{\Z}{{\mathbb Z}}
\newcommand{\bbbm}{{\mathbb M}}
\newcommand{\beq}{\begin{equation}}
	\newcommand{\ee}{\end{equation}}
\renewcommand{\d}{\partial}
\newtheorem{thm}{Theorem}[section]
\newtheorem{lemma}[thm]{Lemma}
\newtheorem{defn}[thm]{Definition}
\newtheorem{ex}[thm]{Example}
\newcommand{\reff}[1]{(\ref{#1})}
\newcommand{\diag}{\mathop{\rm diag}\nolimits}
\newcommand{\dd}{\!\;\mathrm{d}}
\title{Lyapunov function and smooth periodic solutions to 
	quasilinear 1D hyperbolic systems
}
\newcounter{thesame}
\author{
	Irina Kmit
	\thanks{Institute of Mathematics, Humboldt University of Berlin. On leave from the
		Institute for Applied Problems of Mechanics and Mathematics,
		Ukrainian National Academy of Sciences, Lviv. {\small   E-mail:
			{\tt irina.kmit@hu-berlin.de}}}
	\ \ \ Viktor Tkachenko \thanks{Institute of Mathematics,
		National Academy of Sciences of	Ukraine, Kyiv.
		{\small   E-mail:
			{\tt vitk@imath.kiev.ua}}
}}
\date{}
\begin{document}
	
	\maketitle

\begin{abstract}
We apply a Lyapunov function to obtain conditions for the existence and uniqueness of small classical
time-periodic solutions to first order quasilinear 1D hyperbolic systems with (nonlinear) nonlocal boundary conditions  in a strip. 
The boundary conditions cover different types of reflections from the boundary as well as integral operators with delays.
In the first step we use a Lyapunov approach to derive  sufficient conditions for the robust exponential stability 
of the  
boundary value problems for a linear(ized)  homogeneous problem. Under  those conditions and a number of
non-resonance conditions, 
 in the second step we  prove the existence and
uniqueness of smooth time-periodic solutions to the corresponding linear nonhomogeneous problems. In the third step, we prove a
perturbation theorem stating that the periodic solutions survive under small perturbations of all coefficients
of the hyperbolic system. In the last step, we apply the linear results to construct small and smooth time-periodic solutions to the quasilinear problems.  
\end{abstract}
{\it Keywords}: linear and quasilinear first order hyperbolic systems, (nonlinear) nonlocal boundary conditions,
robust  exponential stability, Lyapunov function,
classical time-periodic solutions 

\section{Introduction}

\subsection{Problem setting and main result} 
In the domain $\Pi = \{(x,t)\in  \bbbr^2\,:\,0\le x\le1\}$ we consider
first order quasilinear hyperbolic systems of the type
\begin{equation}\label{1}
\partial_t u + A(x,t,u)\partial_x u =  F(x,t, u),
	\quad x\in(0,1), 
\end{equation}
where  $u=(u_1,\ldots,u_n)$ and $F = (F_1,\dots,F_n)$ are  vectors of real-valued functions,
$A=\diag(A_1,\dots,A_n)$ is a matrix with real-valued entries, and $n\ge 2$
is an integer.
Suppose that in some neighbohrhood of $u = 0$ in $ \bbbr^n$ we have
\begin{eqnarray}\label{strict}
A_1(x,t, u)>\dots>A_m(x,t,u)> 0>A_{m+1}(x,t,u)>\dots>A_n(x,t,u)
\end{eqnarray}
for all $(x,t)\in\Pi$ and an  integer $m$ in the range $0\le m\le n$. 

We supplement the system (\ref{1})  with the boundary conditions 
\begin{eqnarray}\label{2}
\begin{array}{ll}
	u_{j}(0,t)= R_ju(\cdot,t) + H_j\left(t, Q_j(t)u(\cdot,t)\right), \quad 1\le j\le m,\\ [1mm]
	u_{j}(1,t)= R_ju(\cdot,t),+ H_j\left(t, Q_j(t)u(\cdot,t)\right), \quad m< j\le n.
\end{array}
\end{eqnarray}
Here $R_j$ are linear bounded operators from $C([0,1];\bbbr^n)$ to $\bbbr$  defined  by
\begin{eqnarray}
	\displaystyle R_jv=\sum\limits_{k=m+1}^nr_{jk} v_k(0)+\sum\limits_{k=1}^mr_{jk} v_k(1),\quad  j\le n,
	\label{eq:R} 
\end{eqnarray}
$r_{jk}$ are real constants, and $H_j(t,v)$
are  real valued functions from $ \bbbr\times\bbbr$ to~$ \bbbr$. Moreover,
for any $t\in\bbbr$, 
$ Q_j(t)$ are linear bounded operators from $C([0,1];\bbbr^n)$ to $\bbbr$. 
Write 
$$
\begin{array}{ll}
R=(R_1,\dots,R_n),\quad Q(t)=\left(Q_1(t),\dots,Q_n(t)\right),\\ [2mm]
H(t,Q(t)v) = (H_1(t,Q_1(t)v),\dots,H_n(t,Q_n(t)v)).
\end{array}
$$

The purpose of the paper is to establish conditions 
for the existence and
uniqueness of small classical  
time-periodic solutions   to the problem  (\ref{1}), (\ref{2}). 
Since the nonlinearities $H_j$
in (\ref{2}) will be treated by a perturbation argument, 
using the concept of  a Lyapunov function,  we first establish conditions 
for the robust exponential stability of the linearized homogeneous problem without the nonlinearities in (\ref{2}) (when $H_j\equiv 0$ for all $j\le n$). Next we prove the 
existence and
uniqueness of classical  
time-periodic solutions   to the  nonhomogeneous linear problem, under the conditions of the robust exponential stability
and a number of nonresonance conditions. 
Based on this result and its proof, we then prove 
a perturbation theorem stating that  the time periodic solutions to the linearized problem survive under small perturbations of the coefficients of the linear hyperbolic system.
Finally, we apply the obtained results for constructing  the classical  time-periodic solutions to the quasilinear problem.

 We will denote by $\langle \cdot,\cdot\rangle $  the scalar product in $\bbbr^n$.
 We will use the same notation $\|\cdot\|$ for norms of vectors and matrices of different sizes. Thus, if $A$ is
an $k$-vector, then $\|A\|=\max_{j\le k}|A_j|$. If $A$ is
an $k\times k$-matrix, then $\|A\|=\max_{i,j\le k}|A_{ij}|$. By $\bbbm_k$ we denote the space of  $k\times k$-matrices
with real entries.

For a given subdomain $\Omega$ of $\Pi$ or  $\bbbr$,
let $BC(\Omega;\bbbr^n)$
be the Banach space of
all bounded and continuous maps
$u:\Omega \to \bbbr^n$
with the $\sup$-norm
$$
\|u\|_{BC(\Omega;\bbbr^n)}=\sup\left\{\|u(z)\|\,:\,z\in\Omega\right\}.
$$
Analogously one introduces the spaces $BC^k_t(\Omega;\bbbr^n)$ of functions 
$u \in BC(\Omega;\bbbr^n)$ such that $\partial_t u, \dots, \partial_t^k u \in BC(\Omega;\bbbr^n),$ with norm
$$\|u\|_{BC_t^k(\Omega;\bbbr^n)}= \sum_{j=0}^k  \|\partial_t^j u\|_{BC(\Omega;\bbbr^n)}.$$
We also use
the spaces $BC^k(\Omega;\bbbr^n), k =1,2$, of bounded and $k$-times 
continuously differentiable functions. 
 
 Let $T>0$ be a fixed positive real.  Denote by $C_{per}(\Omega;\bbbr^n)$   the vector space of all continuous maps $u : \Omega\to\bbbr^n$
which are $T$-periodic in $t$, endowed with the $BC$-norm.
For $k=0,1,2$, we also  introduce  spaces 
$$
\begin{array}{ll}
C_{per}^k(\Omega;\bbbr^n)=BC^k(\Omega;\bbbr^n)\cap C_{per}(\Omega;\bbbr^n)
\end{array}
$$
and write $C_{per}(\Omega;\bbbr^n)$ for  $C_{per}^0(\Omega;\bbbr^n)$.

For the space of linear
bounded operators from $X$ to $Y$ we use the usual notation ${\cal L}(X,Y)$, and write ${\cal L}(X)$ for ${\cal L}(X,X)$.
The Sobolev space of all
vector-functions from  $L^2\left((0,1),\bbbr^n\right)$ whose distributional derivatives 
belong to the same space $L^2\left((0,1),\bbbr^n\right)$  is denoted, as usual, by  $H^1\left((0,1),\bbbr^n\right)$.

Suppose that the data of the problem  (\ref{1}), (\ref{2})
satisfy the following conditions.
\begin{itemize}
	\item[\bf(A1)]
 There exists $\delta_0 > 0$  such that
\begin{itemize}
	\item 
 the coefficients $A(x,t,v)$ and $F(x,t,v)$ 
are $T$-periodic in $t$ and have bounded and
continuous partial derivatives  up to the second order in $x\in [0,1]$, $t\in\bbbr$,  and $v\in \bbbr^n$
with $\|v\| \le \delta_0$,
\item
 there exists $a_0>0$ such that for all  $\|v\| \le \delta_0$ it holds
\begin{equation}\label{hyp}
	\begin{array}{lll}
		\inf\left\{A_j(x,t,v)\,: \,(x,t)\in\Pi,\, 1\le j\le m\right\}\ge a_0,\\ [1mm]
		\sup\left\{A_j(x,t,v)\,:\, (x,t)\in\Pi,\,m+1\le j\le n\right\}\le -a_0,\\ [1mm]
		\inf\left\{|A_j(x,t,v)-A_k(x,t,v)|\,:\, (x,t)\in\Pi,\, 1\le j\ne k\le n\right\}\ge a_0.
	\end{array}
\end{equation}
\end{itemize}

\item[\bf(A2)] 
  $Q(t)$ is  a a one-parameter family (where $t\in \bbbr$ serves as the parameter) of homogeneous bounded linear operators mapping $C([0,1];\bbbr^n)$ to $\bbbr^n$.
  Moreover, the family  $Q(t): C([0,1];\bbbr^n)\to \bbbr^n$ is $T$-periodic and two-times continuously differentiable in $t$.
Furthermore, for all $v\in 
BC^2_t(\Pi;\bbbr^n)$ and $j\le n$, it holds
\begin{equation}\label{R'}
\begin{array}{rcl}
	\displaystyle  \frac{d}{dt}[Q_j(t)v(\cdot,t)]&=& \displaystyle Q_j^\prime(t) v(\cdot,t)
	+ \widetilde Q_j(t) \partial_tv(\cdot,t),\\ [2mm]
	\displaystyle \frac{d^2}{dt^2}[Q_j(t)v(\cdot,t)]&=& \displaystyle Q_{0j}(t) v(\cdot,t) +Q_{1j}(t) \partial_tv(\cdot,t)
+ Q_{2j}(t) \partial_t^2v(\cdot,t),
\end{array}
\end{equation}
where $Q_j^\prime(t),\widetilde Q_j(t), Q_{0j}(t), Q_{1j}(t) ,Q_{2j}(t)$ are certain  one-parameter families of homogeneous bounded linear operators
mapping $C([0,1];\bbbr^n)$ to $\bbbr$. Moreover, these families are continuous and 
 $T$-periodic  in $t$.

\item[\bf(A3)]
 For all  $j\le n$,
 the function $H_j(t,v)$   is $T$-periodic in $t$ and have 
continuous partial derivatives  up to the second order in  $t \in\bbbr$ and $v\in\bbbr$.

\end{itemize}

To formulate our main result, let us consider the problem (\ref{1}), (\ref{2})  with $H_j\equiv 0$ for all $j\le n$.
The homogeneous part of a linearized version of it at $u=0$ reads as follows:
\begin{equation}\label{1lin}
	\partial_tu  + a(x,t)\partial_x u + b(x,t) u = 0, \quad x\in(0,1),
\end{equation}
\begin{equation}\label{2lin}
\begin{array}{ll}
	u_{j}(0,t)= R_ju(\cdot,t), \quad 1\le j\le m,\\ [1mm]
	u_{j}(1,t)=R_ju(\cdot,t), \quad m< j\le n,
\end{array}
\end{equation}
where $a(x,t)=A(x,t,0)$, $b(x,t)= \partial_3 F(x,t,0)$, and
 $\partial_j$ here and in what follows denotes the partial derivative with respect to the $j$-th argument.
Note that, due to  (\ref{hyp}), it holds
\begin{eqnarray}\label{eq:h1}
	\begin{array}{ll}
		\inf\left\{a_j(x,t)\,:\, (x,t)\in\Pi, \,1\le j\le m\right\}\ge a_0,\\ [1mm]
		\sup\left\{a_j(x,t)\,:\, (x,t)\in\Pi,\,m+1\le j\le n\right\}\le -a_0,\\ [1mm]
		\inf\left\{|a_j(x,t)-a_k(x,t)|\,:\,  (x,t)\in\Pi,\,1\le j\ne k\le n\right\}\ge a_0.
	\end{array}
\end{eqnarray}                                                                                                                                                                                            
For  an arbitrary fixed initial time $s\in\bbbr$,
we supplement the system       (\ref{1lin})--(\ref{2lin})  with the initial conditions                                        
\begin{eqnarray}
	u(x,s) = \varphi(x), \quad x\in[0,1].
	\label{in}
\end{eqnarray}

We start with formulating a result from \cite{KL1}
about the existence and uniqueness of 	$L^2$-generalized  solutions
to the problem
(\ref{1lin}), (\ref{2lin}), (\ref{in}).
Due to \cite[Theorem 2.1]{ijdsde}, for any $\varphi\in C_0^1([0,1];\bbbr^n)$ and sufficiently smooth $a$ and $b$,
 the problem (\ref{1lin}), (\ref{2lin}), (\ref{in})  has a unique classical solution.

\begin{defn}\label{L2}
	Let $\varphi\in L^2((0,1);\bbbr^n)$ and $a,b\in BC^1(\Pi,\bbbm_n)$. A function $u$ that belongs to $C\left([s,\infty); L^2\left((0,1);\bbbr^n\right)\right)$ is called
	an {\rm $L^2$-generalized  solution} to the problem (\ref{1lin}), (\ref{2lin}), (\ref{in}) 
	if, for any sequence 
	$\varphi^l\in C_0^1([0,1];\bbbr^n)$ with 
	$\varphi^l\to\varphi$ in $L^2\left((0,1);\bbbr^n\right)$,
	the sequence $u^l\in C^1\left([0,1]\times[s,\infty);\bbbr^n\right)$
	of  classical solutions to
	(\ref{1lin}), (\ref{2lin}), (\ref{in})  with
	$\varphi$ replaced by $\varphi^l$   fulfills the convergence
	$$
	\|u(\cdot,\theta)-u^l(\cdot,\theta)\|_{L^2\left((0,1);\bbbr^n\right)} \to 0 \mbox{ as } l\to\infty,
	$$
	uniformly in $\theta$ varying in the range $s\le\theta\le s+s_1$, for every $s_1>0$.
\end{defn}

\begin{thm}\label{evol_g}\cite{KL1}
	Suppose that the coefficients  $a$ and $b$ 	of the system (\ref{1lin}) belong to $BC^1(\Pi;\bbbm_n)$ and 
	the conditions (\ref{eq:h1}) are fulfilled. 
	Then, for given $s\in\bbbr$ and $\varphi\in L^2\left((0,1);\bbbr^n\right)$, there exists a unique
	$L^2$-generalized  solution $u:\Pi\to\bbbr^n$ to the problem
	(\ref{1lin}), (\ref{2lin}), (\ref{in}).
\end{thm}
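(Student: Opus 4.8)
The plan is to prove the theorem by the method of characteristics combined with a density argument; the crucial ingredient is an a priori $L^2$-estimate for classical solutions that is uniform on bounded time intervals, after which the statement follows by completeness of $C([s,s+s_1];L^2)$.

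First I would recast the classical problem \reff{1lin}, \reff{2lin}, \reff{in} in integrated form along characteristics. For each $j\le n$ let $\tau\mapsto\omega_j(\tau;x,t)$ solve $\partial_\tau\omega_j=a_j(\omega_j,\tau)$ with $\omega_j(t;x,t)=x$. Since $a_j\in BC^1(\Pi)$ and $|a_j|\ge a_0$, this flow is well defined, is bi-Lipschitz in the spatial variable on every finite $\tau$-interval with Jacobian bounded above and below, and every characteristic reaches one of the lines $\tau=s$, $x=0$, $x=1$ after a time lapse of at least $(\sup|a_j|)^{-1}>0$. Writing the principal part of \reff{1lin} as a total derivative along $\omega_j$ and treating $b(x,t)u$ by Duhamel's formula, a classical solution $u$ satisfies a closed system of Volterra-type integral equations in which the boundary values are substituted from \reff{2lin}. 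The key structural point is that $R_j$ in \reff{eq:R} couples the incoming value $u_j(0,t)$ (resp.\ $u_j(1,t)$) only to the boundary values of components that \emph{arrive} at that time, namely $u_k(0,t)$ with $k>m$ and $u_k(1,t)$ with $k\le m$; hence the substitution is well ordered in time and, because each reflection consumes at least a fixed amount of time, the recursion terminates after finitely many steps on any interval $[s,s+s_1]$.

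The main step is the estimate
\[
\|u(\cdot,\theta)\|_{L^2((0,1);\bbbr^n)}\le C(s_1)\,\|\varphi\|_{L^2((0,1);\bbbr^n)},\qquad s\le\theta\le s+s_1,
\]
with $C(s_1)$ depending only on $s_1$, $a_0$, $\|a\|_{BC^1}$, $\|b\|_{BC^1}$ and the constants $r_{jk}$, but not on $\varphi$. I would obtain it from the integral representation: tracing the characteristic through $(x,\theta)$ backward, $u(x,\theta)$ is a finite sum, with the number of terms bounded by $c\,s_1\sup|a|$ admissible reflection words, of contributions, each one being $\varphi$ composed with a bi-Lipschitz change of the spatial variable multiplied by a uniformly bounded factor built from the $r_{jk}$ and from the fundamental solution of the $b$-part of the ODE along the characteristic segments, plus Volterra remainders that a Gronwall argument absorbs. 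Squaring, summing the finitely many pieces, and using that the change-of-variables Jacobians are bounded above and below on $[s,s+s_1]$ yields the estimate. (The separation $|a_j-a_k|\ge a_0$ in \reff{eq:h1} makes distinct characteristic families transversal and is convenient but, since the system is already diagonal, not essential for this particular statement.) This estimate under the nonlocal boundary conditions is the main obstacle: one must verify that the backward reflection recursion is finite and that neither the number of reflected terms nor the multipliers accumulated from $b$ and the $r_{jk}$ spoil the uniform bound on a fixed time interval.

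With the a priori estimate in hand the theorem follows in the usual way. Given $\varphi\in L^2((0,1);\bbbr^n)$ and $\varphi^l\in C_0^1([0,1];\bbbr^n)$ with $\varphi^l\to\varphi$ in $L^2$, the classical solutions $u^l$ exist and are unique by \cite[Theorem~2.1]{ijdsde} (the corner compatibility conditions being automatic for such $\varphi^l$), and by linearity and the estimate applied to $u^l-u^k$,
\[
\sup_{s\le\theta\le s+s_1}\|u^l(\cdot,\theta)-u^k(\cdot,\theta)\|_{L^2}\le C(s_1)\,\|\varphi^l-\varphi^k\|_{L^2}\longrightarrow 0,
\]
so $\{u^l\}$ is Cauchy in $C([s,s+s_1];L^2)$ for every $s_1>0$; letting $s_1\to\infty$ produces a limit $u\in C([s,\infty);L^2)$. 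The same inequality applied to the difference of solutions generated by two approximating sequences (equivalently, by their interlacing) shows the limit is independent of the choice of $\{\varphi^l\}$, so $u$ is the unique $L^2$-generalized solution in the sense of Definition~\ref{L2}, and the remaining completion/density step is routine.
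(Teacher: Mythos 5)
The paper offers no proof of this theorem --- it is quoted verbatim from \cite{KL1} --- so there is nothing internal to compare against; your characteristics-plus-density argument is the standard construction underlying the cited result and is sound: the uniform bound $\|u(\cdot,\theta)\|_{L^2}\le C(s_1)\|\varphi\|_{L^2}$ for classical solutions, obtained from the finite backward reflection recursion and Gronwall, is exactly what makes the Cauchy/interlacing argument in $C([s,s+s_1];L^2)$ work, and uniqueness is then immediate from Definition~\ref{L2}. One minor imprecision: since each reflection through \reff{eq:R} branches into up to $n$ components, the number of reflection words grows geometrically (roughly $n^{cs_1\sup\|a\|}$) rather than linearly in $s_1$; this only makes $C(s_1)$ exponential in $s_1$ and does not affect the conclusion.
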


To formulate our main result, some notations are in order.
For given $j\le n$, $x \in [0,1]$, and $t \in \bbbr$, the $j$-th characteristic of (\ref{1lin})
passing through the point $(x,t)\in\Pi$ is defined
as the solution 
$$
\xi\in [0,1] \mapsto \omega_j(\xi)=\omega_j(\xi,x,t)\in \bbbr
$$ 
of the initial value problem
\begin{equation}\label{char}
\partial_\xi\omega_j(\xi, x,t)=\frac{1}{a_j(\xi,\omega_j(\xi,x,t))},\;\;
\omega_j(x,x,t)=t.
\end{equation}

For $i=0,1,2$ and $j\le n$, we introduce the notation
\begin{equation} \label{cd}
\hspace{-2mm}	c_j^i(\xi,x,t)=\exp \int_x^\xi
	\left[\frac{b_{jj}}{a_{j}} - i\frac{\partial_t a_{j}}{a_{j}^2} \right](\eta,\omega_j(\eta))\dd\eta,\ \ 
	d_j^i(\xi,x,t)=\frac{c_j^i(\xi,x,t)}{a_j(\xi,\omega_j(\xi))}.
\end{equation}
We write $c_j(\xi,x,t)$ and $d_j(\xi,x,t)$ for $c_j^0(\xi,x,t)$ and $d_j^0(\xi,x,t)$, respectively. 

Thanks to  (\ref{eq:h1}), the characteristic curve $\tau=\omega_j(\xi,x,t)$ passing through the point $(x,t)\in\Pi$ reaches the
boundary of $\Pi$ in two points with distinct ordinates. 
Let $x_j$
denote the abscissa of that point whose ordinate is smaller.
Notice that  the value of  $x_j$
does not depend on $x$ and $t$ and, due to (\ref{eq:h1}), it holds
\begin{equation}\label{*k}
x_j=\left\{
\begin{array}{rl}
0\quad &\mbox{ if }\ 1\le j\le m\\
1\quad &\mbox{ if }\ m<j\le n.
\end{array}
\right.
\end{equation}
Introduce operators $G_0, G_1, G_2 \in {\cal L}(BC(\bbbr, \bbbr^n))$ by
\begin{equation} \label{Ci}
 [G_i\psi]_j(t) = c_j^i(x_j, 1 - x_j, t)
\sum\limits_{k=1}^nr_{jk}\psi_k(\omega_j(x_j, 1 - x_j, t))\quad 
\mbox{for all } j \le n, \  \ i = 0,1,2.
	\end{equation}
\begin{defn}\label{stab}
Let $a, b\in BC^1(\Pi;\bbbm_n)$.
 	
	${\bf (i)}$
	We say that the problem (\ref{1lin})--(\ref{2lin}) is {\rm exponentially stable in~$L^2$} if there exist constants $M\ge 1$ and $\alpha>0$ such that, for all 
	$s\in\bbbr$ and $\varphi\in L^2((0,1);\bbbr^n)$, the $L^2$-generalized solution $u$ to the problem
 (\ref{1lin}), (\ref{2lin}), (\ref{in}) fulfills the exponential decay estimate
 \begin{equation}\label{stabily}
 \|u(\cdot,t)\|_{L^2((0,1);\bbbr^n)}\le Me^{-\alpha (t-s)}\|\varphi\|_{L^2((0,1);\bbbr^n)}\quad\mbox{for all } t\ge s.
 \end{equation}
 
${\bf (ii)}$ 	We say that the problem (\ref{1lin})--(\ref{2lin}) is {\rm robust exponentially stable in $L^2$} if there exist constants $\gamma>0$, $M\ge 1$, and $\alpha>0$ such that, for all 
$s\in\bbbr$, $\varphi\in L^2((0,1);\bbbr^n)$, and 
$\tilde a, \tilde b\in BC^1(\Pi;\bbbm_n)$ with 
$\|\tilde a - a\|_{BC^1(\Pi;\bbbm^n)} \le \gamma$ and $ \|\tilde b - b\|_{BC^1(\Pi;\bbbm^n)} \le \gamma$,
 the $L^2$-generalized solution $\tilde u$ to the problem
(\ref{1lin}), (\ref{2lin}), (\ref{in}) with $a$ and $b$ replaced by 
$\tilde a$ and $\tilde b$, respectively, 
fulfills the (uniform) exponential decay estimate
\begin{equation}\label{unif_stabily}
\|\tilde u(\cdot,t)\|_{L^2((0,1);\bbbr^n)}\le Me^{-\alpha (t-s)}\|\varphi\|_{L^2((0,1);\bbbr^n)}\quad\mbox{for all } t\ge s.
\end{equation}
\end{defn}

We are prepared to formulate the main result of the paper.

\begin{thm}\label{main} Let the conditions $\bf(A1)$--$\bf(A3)$ be fulfilled.
Assume also that the conditions
\begin{eqnarray} \label{G_i}
\|G_i\|_{{\cal L}(BC(\bbbr, \bbbr^n))} < 1
\end{eqnarray}
are satisfied  for $i=0, 1, 2$ and  
the  
problem (\ref{1lin})--(\ref{2lin})
is robust exponentially stable in $L^2$.
	Then there exist $\varepsilon>0$ and $\delta>0$ such that, 
	if $\|F_j(\cdot,\cdot,0)\|_{BC_t^2(\Pi)} \le\varepsilon$,
	 $\|H_j(\cdot,0)\|_{BC^2(\bbbr)}  \le\varepsilon$,
	 and $\|\partial_2 H_j(\cdot,0)\|_{BC^1(\bbbr)}  \le\varepsilon$
	  for all $j\le n$, then the problem~(\ref{1}), (\ref{2}) has a unique  
	  T-periodic classical
	  solution $u^*\in C^2_{per}(\Pi,\bbbr^n)$   such that $\|u^*\|_{BC^2(\Pi;\bbbr^n)}\le~\delta$.
\end{thm}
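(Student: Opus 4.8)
The plan is to rewrite \reff{1}, \reff{2} as a fixed-point equation for an operator assembled from the linear nonhomogeneous theory, and to solve it by a contraction argument on a small ball. Split $F(x,t,v)=F(x,t,0)-b(x,t)v+\Phi(x,t,v)$, with $b$ the zero-order coefficient appearing in \reff{1lin} and $\Phi$ the quadratic Taylor remainder, so that $\Phi(x,t,0)=0$, $\partial_3\Phi(x,t,0)=0$. For $w$ in $\overline{B}_\delta:=\{w\in C^2_{per}(\Pi;\bbbr^n):\|w\|_{BC^2(\Pi;\bbbr^n)}\le\delta\}$ define $u={\cal T}(w)$ to be the $T$-periodic solution of
\[
\begin{array}{l}
\partial_t u+A(x,t,w)\partial_x u+b(x,t)u=F(x,t,0)+\Phi(x,t,w),\\[1mm]
u_j(x_j,t)=R_ju(\cdot,t)+H_j\bigl(t,Q_j(t)w(\cdot,t)\bigr),\quad j\le n.
\end{array}
\]
A direct check shows that $w$ is a fixed point of ${\cal T}$ if and only if $w$ is a classical $T$-periodic solution of \reff{1}, \reff{2}, so it suffices to produce a unique fixed point of ${\cal T}$ in $\overline{B}_\delta$. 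Since, by (A1), $A(\cdot,\cdot,w)$ is a $BC^1$-small perturbation of $a=A(\cdot,\cdot,0)$ when $\|w\|$ is small, the robust exponential stability hypothesis together with the linear nonhomogeneous existence/uniqueness theorem and the perturbation theorem of the previous sections guarantee that ${\cal T}$ is well defined on $\overline{B}_\delta$ and obeys $\|{\cal T}(w)\|_{BC^2(\Pi;\bbbr^n)}\le C_*\bigl(\|F(\cdot,\cdot,0)\|_{BC^2_t}+\sum_j\|H_j(\cdot,Q_j(\cdot)w(\cdot,\cdot))\|_{BC^2}+\|\Phi(\cdot,\cdot,w)\|_{BC^2_t}\bigr)$ with $C_*$ uniform over $\overline{B}_\delta$; obtaining this bound in the $BC^2$-norm is exactly where all three conditions \reff{G_i}, $i=0,1,2$, are used. (Robust exponential stability also supplies the nonresonance the linear theory needs, since an exponentially decaying $T$-periodic function is identically zero.)

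Next I would verify the self-mapping property. Because $\Phi$ is quadratic, $\|\Phi(\cdot,\cdot,w)\|_{BC^2_t}\le C\|w\|_{BC^2}^2\le C\delta^2$, and from $H_j(t,Q_j(t)w)=H_j(t,0)+\partial_2H_j(t,0)Q_j(t)w+(\text{quadratic})$ one gets $\|H_j(\cdot,Q_j(\cdot)w)\|_{BC^2}\le\|H_j(\cdot,0)\|_{BC^2}+C\|\partial_2H_j(\cdot,0)\|_{BC^1}\delta+C\delta^2$. Hence, under the smallness assumptions on $F_j(\cdot,\cdot,0)$, $H_j(\cdot,0)$ and $\partial_2H_j(\cdot,0)$, one has $\|{\cal T}(w)\|_{BC^2}\le C_*\bigl(3\varepsilon+C\varepsilon\delta+C\delta^2\bigr)$, and choosing first $\delta$ small (so that the $\delta^2$-term is $\le\delta/2$) and then $\varepsilon\le c\delta$ makes the right-hand side $\le\delta$, so ${\cal T}(\overline{B}_\delta)\subseteq\overline{B}_\delta$. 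For the contraction, subtract the two linear problems defining ${\cal T}(w_1)$ and ${\cal T}(w_2)$: the difference solves a linear problem with coefficient $A(\cdot,\cdot,w_1)$ and right-hand side built from $[A(\cdot,\cdot,w_2)-A(\cdot,\cdot,w_1)]\partial_x{\cal T}(w_2)$, from $\Phi(\cdot,\cdot,w_1)-\Phi(\cdot,\cdot,w_2)$, and from $H_j(t,Q_j(t)w_1)-H_j(t,Q_j(t)w_2)$. In the weaker norm $\|\cdot\|_{BC^1(\Pi;\bbbr^n)}$ each term is $\le C(\delta+\varepsilon)\|w_1-w_2\|_{BC^1}$: the first because $\partial_x{\cal T}(w_2)$ is controlled by $\|{\cal T}(w_2)\|_{BC^2}\le\delta$ after eliminating $\partial_x$ via the differential equation and $A$ is Lipschitz in $v$; the second because $\Phi$ has $O(\delta)$ Lipschitz constant near $v=0$; the third because $\|\partial_2H_j(\cdot,0)\|_{BC^1}\le\varepsilon$ and the remaining quadratic part is $O(\delta)$-Lipschitz. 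So, for $\delta,\varepsilon$ small, ${\cal T}$ is a $\|\cdot\|_{BC^1}$-contraction on $\overline{B}_\delta$.

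It remains to extract a fixed point that genuinely lies in $C^2_{per}$ with the stated bound. From the $BC^1$-contraction the iterates ${\cal T}^k(0)$ are $BC^1$-Cauchy while staying bounded by $\delta$ in $BC^2$; estimating $\|{\cal T}^{k+1}(0)-{\cal T}^{k}(0)\|_{BC^2}\le\theta\|{\cal T}^{k}(0)-{\cal T}^{k-1}(0)\|_{BC^2}+C\|{\cal T}^{k}(0)-{\cal T}^{k-1}(0)\|_{BC^1}$ with $\theta<1$ and the last term geometrically small, the convergence upgrades to $BC^2$ and yields a fixed point $u^*\in C^2_{per}(\Pi;\bbbr^n)$ with $\|u^*\|_{BC^2(\Pi;\bbbr^n)}\le\delta$; uniqueness in $\overline{B}_\delta$ follows from the $BC^1$-contraction, and every $C^2_{per}$-solution of \reff{1}, \reff{2} with norm $\le\delta$ is such a fixed point. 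The main obstacle is precisely this regularity step: freezing the quasilinear coefficient $A(x,t,u)$ costs one derivative, so the map is contractive only one order below the space in which the solution must be constructed, and bridging that gap depends on the sharp $C^2$-estimates for the linearized problem — which is exactly why the theorem both requires \reff{G_i} for all $i=0,1,2$ and imposes $C^2_t$/$C^2$-smallness of the data rather than mere continuity-level smallness.
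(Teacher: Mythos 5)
Your overall architecture coincides with the paper's: freeze the coefficients at the previous iterate, solve the resulting linear $T$-periodic problem via the linear existence theorem combined with the perturbation theorem (which is exactly where robust exponential stability and the three conditions (\ref{G_i}) enter), establish a uniform $BC^2$ bound on the iterates, prove contraction in the weaker $BC^1$ norm, and finally upgrade the convergence to $BC^2$. The first three of these steps are sound and correspond to the paper's Claims~\ref{8}, \ref{9} and \ref{11}; the difference between your splitting $F=F(\cdot,\cdot,0)-bv+\Phi$ and the paper's $B(x,t,u)u=F(x,t,0)-F(x,t,u)$ is immaterial.

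The gap is in the $BC^2$-upgrade, which you yourself call ``the main obstacle'' but then dispose of with the unproved inequality $\|{\cal T}^{k+1}(0)-{\cal T}^k(0)\|_{BC^2}\le\theta\|{\cal T}^k(0)-{\cal T}^{k-1}(0)\|_{BC^2}+C\|{\cal T}^k(0)-{\cal T}^{k-1}(0)\|_{BC^1}$. The only available route to such an inequality in your setup --- applying the a priori estimate (\ref{est22p}) to the equation satisfied by the difference $z^{k+1}={\cal T}^{k+1}(0)-{\cal T}^{k}(0)$ --- fails: the right-hand side of that difference equation contains the term $[A(\cdot,\cdot,u^{k})-A(\cdot,\cdot,u^{k-1})]\partial_x u^{k}$, and (\ref{est22p}) requires the $BC$-norm of its second $t$-derivative, which produces $\partial_t^2\partial_x u^{k}$, a third-order derivative of an iterate that is only known to lie in $C^2$. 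Smallness of the factor $A(u^{k})-A(u^{k-1})$ does not rescue this, since the other factor is not finite. This is precisely the loss-of-derivatives phenomenon that forces the paper into a different argument in Claim~\ref{10}: there the iteration equation is differentiated twice in $t$ (using the PDE itself to eliminate mixed $x$-derivatives), yielding an affine recursion $w^{k+1}={\cal A}(k)w^{k}+X^{k}$ for $w^{k+1}=\partial_t^2u^{k+1}$ in which only $\partial_t^2u^{k}$ --- never a third derivative --- appears, with $\|{\cal A}(k)\|$ uniformly below $1$ and $X^{k}$ depending only on quantities already shown to converge; convergence of $(w^{k})$ then follows from the fiber contraction principle (Lemma~\ref{fiber}), not from a two-norm contraction estimate for consecutive differences. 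To complete your proof you would have to either derive your two-norm inequality by this route (thereby reproducing the paper's Claim~\ref{10}, including the uniform invertibility of the operators $I-C(k)-(I+D(k))S(k){\cal G}_1(k)$) or supply some other justification; as written, the decisive step does not follow.
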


The paper is organized as follows. 
We complete the current introduction section with Sections  \ref{remarks} and \ref{related}  including related work and some remarks on our problem.
In Section \ref{linear} we give a thorough analysis of linear problems. Specifically, in Section \ref{periodic}  we prove that, if 
the homogeneous problem (\ref{1lin})--(\ref{2lin}) is  exponentially stable and dissipative, then  the nonhomogeneous linear problem  (\ref{1lin0}), (\ref{2lin}) has a unique smooth time-periodic solution, for every right hand side (Theorem \ref{lin-smooth}). 
In Section \ref{Lyapunovstability} we use the Lyapunov approach and derive sufficient conditions for the robust 
exponential stability 
of  (\ref{1lin})--(\ref{2lin}) in $L^2$ (Theorem \ref{L1}). 
We also give an example showing that the conditions 
of Theorem~\ref{lin-smooth} are consistent  (Example~\ref{ex1}). 
The time-periodic  solution survives under
small perturbations of the coefficients of the linear hyperbolic system
and satisfies a uniform a priori estimate, what is the scope of Theorem \ref{prop3}  in Section \ref{perturbation}.  Our main result stated in Theorem~\ref{main} about the local existence and the local uniqueness of small time-periodic solutions to the quasilinear problem is proved in Section \ref{quasilinear}.

\subsection{Remarks} \label{remarks}

\subsubsection{Condition (\ref{strict}) on the slope of the characteristic curves and correctness of the problem \reff{1}, (\ref{2}).} 
Accordingly to  (\ref{strict}) and (\ref{2}),
the number of the boundary conditions posed on the left hand side and on the right hand side of the boundary of $\Pi$ is strongly related
to the number of characteristic curves coming to the boundary as time decreases.
Remark that for the well-posedness of {\it time-periodic} hyperbolic problems this 
relation is not essential and can easily be dropped. We nevertheless keep
this relation in our assumptions, because our approach 
to solving  quasilinear problems 
involves  the corresponding   linear(ized) {\it initial boundary} value problems (in particular, the exponential stability issue  for 
those problems), where this relation is crucial  for the well-posedness.

\subsubsection{Dissipativity conditions (\ref{G_i}) and small divisors.}
The complication with small divisors naturally
appears when  addressing the existence  of
time-periodic solutions to hyperbolic PDEs.
The dissipativity or, the same, the non-resonant conditions (\ref{G_i}) on the coefficients of the linearized 
problem
are sufficient conditions  served to prevent  small divisors (or resonances) from coming up.  An interesting feature 
observed in our previous work (see 
\cite[Remark 1.4]{KR} and \cite[Section 3.6]{jee}) is that the number of those conditions in 
the nonautonomous setting essentially depends on the regularity of the
continuous solutions. Any higher order of the solution regularity requires  additional non-resonance conditions.

\subsubsection{Exponential stability  condition and existence of time-periodic solutions.}
The exponential stability condition not only ensures a "regular" behavior of the linearized problem, 
but also plays a crucial role in proving the solvability of the nonhomogeneous version of this problem. 
In particular, it contributes to the uniqueness part of the proof and, in combination with the 
Fredholm property,  leads to the desired solvability.

\subsubsection{Robust exponential stability condition and perturbation theorem.
}
The robustness part of the  exponential stability assumption in our main Theorem \ref{main} plays an important role in the proof of the perturbation 
Theorem \ref{prop3}. 
Whether this part can be omitted here seems to be an interesting open question.
Remark that the  conditions provided by Theorem \ref{prop3}
and derived by the Lyapunov approach
are sufficient conditions for the robust exponential stability required in Theorem \ref{main}. In other words,
Theorem \ref{prop3} justifies the robust exponential stability condition.

 \subsection{Related work} \label{related}
As mentioned above, one of the crucial assumptions in our main Theorem~\ref{main} is the existence 
and robustness of the exponential stability
 of the linearized problem.  A number of (sufficient) conditions for these assumptions are available in the literature. 
 These include the spectral criterion \cite{BC2,Lichtner,Neves,Renardy}, the resolvent criterion \cite[Theorem 1.11, p. 302]{Engel}, and
  the  Lyapunov approach
 (see, e.g. \cite{BC2,BCbook,BCN,DGL,GLTW,Ha,HSh1,HSh}). How these criteria and approaches can be used in concrete problems is illustrated by Example
 \ref{ex1} of the present paper, as well as by \cite[Examples 1.7 and 1.8]{KT}.
 
The robustness issue of exponential dichotomy, especially exponential stability, has been extensively studied in the literature (see, e.g., \cite{ChL2,H,PS}).
These results are hardly applicable to hyperbolic PDEs in the case of unbounded perturbations, mostly due to the well-known phenomenon of loss of regularity.
In \cite{KRT,KRT2} we proved a robustness theorem for the system (\ref{1lin})--(\ref{2lin}) with the boundary conditions (\ref{2lin}) of a smoothing type described  in \cite{Km,KL1}.
In \cite{KKR} 
the robustness issue is addressed in the case of  the
periodic boundary conditions, which are, evidently, not of the smoothing type. There, the authors give a number of sufficient conditions
for  robust exponential stability.
For 1D hyperbolic problems with more general boundary conditions,  the robustness issue
 remains a challenging open problem.
 
Note that the existence of small time-periodic solutions to quasilinear hyperbolic problems has been investigated by other approaches in
  \cite{Li,Qu0,Qu1,TY2,Y,Zhang}.
 The main difference lies in the linear analysis performed in Section \ref{linear}.
More precisely, the main point is  establishing conditions for the
 unique classical solvability of the nonhomogeneous time-periodic linearized problems.  A number of sufficient
 conditions  (different from ours) for this solvability 
 are obtained in \cite{BCN,DDTK1,KM,KL1,KR,KT,MV,PW}.
 
Speaking about potential applications of the models described by our system (\ref{1}), (\ref{2}), we would like to mention 
the one-dimensional version of the classical Saint-Venant system for shallow water in open channels and its generalizations \cite{BMPV,HPCAB},
the Saint-Venant--Exner model \cite{DDTK1,PW,SNC},
and the one-dimensional Euler equations \cite{GDL2,TY2,Y}.
The  system (\ref{1}), (\ref{2}) is also used to describe rate-type materials in viscoelasticity \cite{Cri,MV}  and the interactions between heterogeneous cancer cells \cite{BE}.

\section{Linear systems}\label{linear}

\subsection{Existence  of time-periodic solutions}\label{periodic}
\setcounter{claim}{0}
In this section we investigate a nonhomogeneous version of the problem 
(\ref{1lin})--(\ref{2lin}). Specifically, we consider
the general linear nonhomogeneous hyperbolic system 
\begin{equation}\label{1lin0}
\partial_tu  + a(x,t)\partial_x u + b(x,t) u = f(x,t), \quad x\in(0,1),
\end{equation}
subjected to the  linear nonhomogeneous boundary conditions
\begin{equation}\label{2p}
\begin{array}{ll}
u_{j}(0,t)= R_ju(\cdot,t) + h_j(t), \quad 1\le j\le m,\\ [1mm]
u_{j}(1,t)= R_ju(\cdot,t) + h_j(t), \quad m< j\le n.
\end{array}
\end{equation}

  The next theorem is a natural generalization of \cite[Theorem 1.6]{KT} for the case of  the inhomogeneous boundary conditions.

\begin{thm} \label{lin-smooth}
	${\bf (i)}$ 
	Assume that $a, b\in C^1_{per}(\Pi;\bbbm_n)$, $h\in C^1_{per}(\bbbr;\bbbr^n)$, and
	$ f\in C_{per}(\Pi;\bbbr^n)\cap BC^1_t(\Pi);\bbbr^n)$. Moreover, suppose 
	that the problem (\ref{1lin})--(\ref{2lin})  is exponentially stable in~$L^2$.
		If   the  conditions (\ref{G_i})
	are fulfilled  for $i=0$ and $i=1$, then
	the system  (\ref{1lin0})--(\ref{2p}) has a unique  classical time-periodic solution $u^*\in C_{per}^1(\Pi;\bbbr^n)$.
	Moreover, the a priori estimate 
	\begin{equation} \label{est211}
		\|u^*\|_{BC^1(\Pi;\bbbr^n)} \le L_1\left(\| f \|_{BC_t^1(\Pi;\bbbr^n)}+\| h \|_{BC^1(\bbbr;\bbbr^n)} \right)
	\end{equation}
	is fulfilled for a constant $L_1$ not depending on $f$ and $h$.
	
	${\bf (ii)}$ Assume, additionally, that $a,b\in BC_t^2(\Pi;\bbbm_n)$, 
	$h\in C^2_{per}(\bbbr;\bbbr^n)$, and
	$ f\in BC^1(\Pi;\bbbr^n)\cap BC^2_t(\Pi;\bbbr^n)$ and that the  condition
	(\ref{G_i}) is true for $i=2$. Then
	$u^* \in C^2_{per}(\Pi; \bbbr^n)$.
	Moreover, the a priori estimate 
	\begin{equation} \label{L6}
		\|u^*\|_{BC^2(\Pi;\bbbr^n)}   \le L_2
		\left(\| f\|_{BC^1(\Pi;\bbbr^n)}+\|\partial_t^2 f\|_{BC\left(\Pi;\bbbr^n\right)}+	\| h \|_{BC^2(\bbbr;\bbbr^n)} \right)	
	\end{equation}
	is fulfilled for a constant $L_2$ not depending on $f$ and $h$.
\end{thm}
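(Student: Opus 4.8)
The plan is to follow the scheme of \cite[Theorem 1.6]{KT}, the genuinely new point being the treatment of the inhomogeneous boundary terms $h_j$.

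\textbf{Step 1 (reduction to integral equations along characteristics).} Integrating the $j$-th equation of (\ref{1lin0}) along the characteristic $\xi\mapsto\omega_j(\xi,x,t)$ from the incoming endpoint $x_j$ (see (\ref{*k})) and using the coefficients (\ref{cd}), every $C^1$ solution $u$ satisfies
\[
u_j(x,t)=c_j(x_j,x,t)\,u_j\bigl(x_j,\omega_j(x_j,x,t)\bigr)
+\int_{x_j}^{x}d_j(\xi,x,t)\Bigl(f_j-\!\!\sum_{k\ne j}b_{jk}u_k\Bigr)\!\bigl(\xi,\omega_j(\xi,x,t)\bigr)\dd\xi ,
\]
and conversely. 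Writing $\psi_k(t):=u_k(1-x_k,t)$ for the outgoing boundary traces and noting that $R_k$ in (\ref{eq:R}) evaluates each component precisely at its outgoing endpoint $1-x_k$, so that $R_ju(\cdot,t)=\sum_{k=1}^n r_{jk}\psi_k(t)$, insertion of (\ref{2p}) into the above identity at $x=1-x_j$ yields a closed system for the traces,
\[
\psi=G_0\psi+\mathcal K\psi+g\qquad\text{in }C_{per}(\bbbr;\bbbr^n),
\]
where $G_0$ is the operator (\ref{Ci}), the datum $g$ collects the $f$- and $h$-contributions (it lies in $C^1_{per}(\bbbr;\bbbr^n)$ since $f\in BC^1_t$ and $h\in C^1_{per}$), and $\mathcal K$ gathers the remaining lower-order terms, namely the $b_{jk}$-integrals composed with the Volterra reconstruction of the interior values $u_k(\xi,\cdot)$ from $\psi$.

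\textbf{Step 2 (solvability in $C_{per}$).} I would first check that $\mathcal K$ is compact on $C_{per}(\bbbr;\bbbr^n)$: performing in each characteristic integral the substitution $\xi\mapsto\omega_\bullet(\xi,\cdot)$ (admissible because distinct characteristics are transversal by (\ref{eq:h1})), $\mathcal K$ becomes an integral operator whose kernel depends continuously — in fact $C^1$ — on $t$, hence is compact by Arzel\`a--Ascoli on $\bbbr/T\bbbr$. Since $\|G_0\|_{\LL(BC(\bbbr,\bbbr^n))}<1$ by (\ref{G_i}), the operator $I-G_0$ is boundedly invertible, so $I-G_0-\mathcal K=(I-G_0)\bigl(I-(I-G_0)^{-1}\mathcal K\bigr)$ is Fredholm of index zero on $C_{per}(\bbbr;\bbbr^n)$. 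By the Fredholm alternative the trace equation, and with it the problem (\ref{1lin0})--(\ref{2p}), is uniquely solvable in $C_{per}$ as soon as the homogeneous problem ($f\equiv0$, $h\equiv0$) admits only the trivial $T$-periodic solution. This follows from the assumed exponential stability in $L^2$: a $T$-periodic classical solution $u$ of (\ref{1lin})--(\ref{2lin}) is, by Definition \ref{L2}, the $L^2$-generalized solution with initial datum $u(\cdot,s)$, whence $\|u(\cdot,t)\|_{L^2}\le Me^{-\alpha(t-s)}\|u(\cdot,s)\|_{L^2}$ for all $t\ge s$; letting $s\to-\infty$ and using that $t\mapsto\|u(\cdot,t)\|_{L^2}$ is bounded and $T$-periodic forces $u\equiv0$. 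Reconstructing the interior values from the trace solution $\psi^*$ through the Volterra equations of Step 1 gives $u^*\in C_{per}(\Pi;\bbbr^n)$, and the bound for $\|u^*\|_{BC(\Pi;\bbbr^n)}$ follows from $\psi^*=(I-G_0-\mathcal K)^{-1}g$ together with the uniform boundedness of all operators involved.

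\textbf{Step 3 (regularity and the estimates (\ref{est211}), (\ref{L6})).} For part (i) I would differentiate the trace equation in $t$: differentiating $c_j^0(x_j,1-x_j,t)\,\psi_k(\omega_j(x_j,1-x_j,t))$ produces, as leading term, $c_j^1(x_j,1-x_j,t)\,\partial_t\psi_k(\omega_j(x_j,1-x_j,t))$ — the extra summand $-\partial_ta_j/a_j^2$ in (\ref{cd}) being exactly the $t$-derivative of $1/a_j$ governing $\partial_t\omega_j$ — so $v:=\partial_t\psi^*$ solves $v=G_1v+\mathcal K_1v+g_1$ with $\mathcal K_1$ compact and $g_1\in C_{per}(\bbbr;\bbbr^n)$; since $\|G_1\|<1$ this is uniquely solvable, hence $\partial_t\psi^*\in C_{per}$ and therefore $\partial_tu^*\in C_{per}(\Pi;\bbbr^n)$, and then $\partial_xu^*=-a^{-1}\bigl(\partial_tu^*+bu^*-f\bigr)\in C_{per}(\Pi;\bbbr^n)$ from (\ref{1lin0}), $a$ being invertible by (\ref{eq:h1}). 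Thus $u^*\in C^1_{per}(\Pi;\bbbr^n)$, and tracking constants yields (\ref{est211}). Part (ii) follows by one more differentiation in $t$: under the additional regularity of $a,b,f,h$ the function $\partial_t^2\psi^*$ solves an equation whose leading operator is $G_2$, so $\|G_2\|<1$ gives $\partial_t^2\psi^*\in C_{per}$ and hence $\partial_t^2u^*\in C_{per}(\Pi;\bbbr^n)$; differentiating (\ref{1lin0}) in $t$ and in $x$ then produces $\partial_t\partial_xu^*$ and $\partial_x^2u^*$ in $C_{per}(\Pi;\bbbr^n)$ (here $a,b\in BC^2_t$ and $f\in BC^1$ enter), so $u^*\in C^2_{per}(\Pi;\bbbr^n)$ and (\ref{L6}) holds.

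\textbf{Main obstacle.} The delicate part is Step 2: showing that $\mathcal K$ (and later $\mathcal K_1,\mathcal K_2$) is genuinely compact, and that after substitution of the boundary conditions the leading part of the trace operator is exactly $G_i$ with operator norm controlled by $\|G_i\|$. This rests on careful bookkeeping of the composed characteristic transports and of the reflection coefficients $r_{jk}c_j^i$, and on transversality of characteristics to justify the changes of variables. Coupling the interior Volterra reconstruction with the boundary equation, and transferring uniqueness from the $L^2$-generalized setting to classical periodic solutions, also require care, though both are routine given Theorem \ref{evol_g} and Definition \ref{L2}.
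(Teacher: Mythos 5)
Your overall strategy (integral equations along characteristics, Fredholm alternative plus $L^2$-exponential stability for uniqueness, bootstrapping regularity through $G_1,G_2$) is the same as the paper's, but there is a genuine gap in Steps 1--2: the reduction to a \emph{closed} equation $\psi=G_0\psi+\mathcal K\psi+g$ for the boundary traces. The integral term in the characteristic representation of $u_j$ involves the interior values $u_k(\xi,\cdot)$, $k\ne j$, and these satisfy representation formulas that again contain interior values of the other components transported along characteristics of the \emph{opposite} family. So your ``Volterra reconstruction of the interior values from $\psi$'' is not Volterra at all: writing $u=\widetilde C\psi+\widetilde Ph+Du+Sf$ for the interior unknown (with $D$ the partial integral operator of \reff{CDF}), eliminating $u$ requires inverting $I-D$. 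Since the iterated integrals alternate between left- and right-going characteristics, the domains of integration do not shrink, $\|D^N\|$ does not decay factorially, $D$ is not compact, and the invertibility of $I-D$ is neither assumed nor derivable from the hypotheses (the dissipativity conditions \reff{G_i} control $C$, not $D$). Without it, $\mathcal K$ is not a well-defined operator on $C_{per}(\bbbr;\bbbr^n)$, and the subsequent compactness and Fredholm arguments have nothing to act on. This is precisely the obstruction the paper's proof is built to avoid: it keeps the equation $u=Cu+Du+Ph+Sf$ on $C_{per}(\Pi;\bbbr^n)$, inverts only $I-C$ (which \reff{G_i} does control), and obtains Fredholmness of $I-C-D$ via the regularizer $I+(I-C)^{-1}D$, using that the \emph{compositions} $D^2$ and $DC$ of transports along transversal characteristics are genuine integral operators, hence compact and smoothing, even though $D$ itself is not.

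A secondary, repairable point: in your uniqueness step the Fredholm alternative only gives you a kernel element of the trace (or interior) operator equation, i.e.\ an a priori merely \emph{continuous} periodic solution, while Definition \ref{L2} and the decay estimate \reff{stabily} apply to $L^2$-generalized solutions built from classical ones. You need first to upgrade such a kernel element to a classical ($C^1$) solution — the paper does this by iterating the operator equation and using the smoothing of $DC+D^2$ together with the invertibility of $I-C$ on $BC^1_t$ — before the exponential decay can be invoked to conclude it vanishes.
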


\begin{proof}
	 The proof  
	  essentially repeats the  proof of 
	\cite[Theorem 1.6]{KT}, where a difference lies in the
	boundary conditions, which are inhomogeneous now.
We anyway provide this proof  for the sake of completeness and for our further purposes.

	First
write down the  problem (\ref{1lin0})--(\ref{2p})  in an equivalent operator form. To this end, introduce   operators    $C, D, S \in {\cal L}(BC(\Pi;\bbbr^n))$ and $P\in {\cal L}(BC(\bbbr;\bbbr^n))$ by
\begin{equation}\label{CDF}
\begin{array}{lll}
& &	[Cu]_j(x,t)= c_j(x_j,x,t)R_j u(\cdot,\omega_j(x_j,x,t))),
\\[2mm]
& &	[Ph]_j(x,t)= c_j(x_j,x,t)h_j(\omega_j(x_j,x,t))),
\\[2mm]
& & [Du]_j(x,t)=\displaystyle
-\int_{x_j}^{x}  d_j(\xi,x,t)\sum_{k\neq j}  b_{jk}(\xi, \omega_j(\xi,x,t))u_k(\xi, \omega_j(\xi,x,t)) \dd\xi,\\ [2mm]
& & [Sf]_j(x,t)=\displaystyle\int_{x_j}^{x}d_j(\xi,x,t)f_j(\xi, \omega_j(\xi,x,t)) \dd\xi.
\end{array}
\end{equation}
Then the problem (\ref{1lin0})--(\ref{2p}) after the integration  along the characteristic curves
can be written in the following operator form:
\begin{eqnarray} \label{oper}
u=Cu+Du+Ph+Sf.
\end{eqnarray}  
A function $u\in C_{per}(\Pi;\bbbr^n)$ satisfying the operator equation 
(\ref{oper}) pointwise is called a {\it continuous solution } to the  problem (\ref{1lin0})--(\ref{2p}).
 
 The proof of the theorem is divided into a number of claims. Specifically, the
proof of  Part ${\bf (i)}$ goes through Claims \ref{111}--\ref{5}, while the proof of  Part ${\bf (ii)}$  is done in Claim \ref{6}--\ref{7}.

In Claims \ref{111}--\ref{5} we, therefore, assume that  the conditions of  Part ${\bf (i)}$ of the theorem are fulfilled.

\begin{claim} \label{111}
	The operators $I-C: C_{per}(\Pi;\bbbr^n)\to C_{per}(\Pi;\bbbr^n)$  and 
	$I-C: C_{per}(\Pi;\bbbr^n)\cap BC^1_t(\Pi;\bbbr^n)\to C_{per}(\Pi;\bbbr^n)\cap BC^1_t(\Pi;\bbbr^n)$ defined by (\ref{CDF}) are bijective.
\end{claim}

\begin{subproof}
First note that 
$C$ maps both  $C_{per}(\Pi;\bbbr^n)$ into itself and $C_{per}(\Pi;\bbbr^n)\cap BC^1_t(\Pi;\bbbr^n)$ into itself.
Hence, to prove that $I-C$ is bijective from $C_{per}(\Pi;\bbbr^n)$  to itself,
it sufffices to show
that the system 
\begin{equation}\label{simpl}
	\begin{array}{rcl}
		u_j(x,t)&=&[Cu]_j(x,t)+g_j(x,t)\\ [2mm]
		&=&c_j(x_j,x,t)R_j u(\cdot,\omega_j(x_j,x,t))+g_j(x,t), \quad j\le n,
	\end{array}
\end{equation}
is uniquely solvable in $BC (\Pi;\bbbr^n)$ for any $g\in C_{per}(\Pi;\bbbr^n)$.
Since $Cu$ is a certain vector-function of $u_1,\dots,u_n$ evaluated at the boundary of $\Pi$, we first consider
(\ref{simpl})
at $x=0$ for $m<j\le n$ and at $x=1$ for $1\le j\le m$. 
On the account of  (\ref{Ci}),
 we get the following system:
\begin{equation}\label{simpl1}
	\begin{array}{rcl}
		z_j(t)&=& \displaystyle c_j(x_j,1 - x_j,t)\sum\limits_{k=1}^nr_{jk}z_k(\omega_j(x_j, 1 - x_j, t)) + g_j(1-x_j,t)\\ [4mm]
		&=& [G_0z]_j(t) + g_j(1-x_j,t), 
		\quad  j\le n,
	\end{array}
\end{equation}
with respect to the new unknown 
$$z(t) = (z_1(t),\dots,z_n(t))= (u_1(1,t),\dots, u_m(1,t), u_{m+1}(0,t),\dots,u_n(0,t)).$$
Evidently, the system (\ref{simpl}) is uniquely solvable in $BC (\Pi;\bbbr^n)$ if and only if
\begin{equation}\label{contr1}
	I-G_0 \mbox{ is bijective from }   BC^l(\bbbr;\bbbr^n) \mbox{ to }   BC^l(\bbbr;\bbbr^n)
\end{equation}
for $l=0$. This statement straightforwardly follows from the condition  (\ref{G_i}) for $i=0$, and this is the scope of the first part of the claim.

As it follows from the above, to prove  that the operator $I-C$ is bijective from 
$C_{per}(\Pi;\bbbr^n)\cap BC^1_t(\Pi;\bbbr^n)$ to itself, we have to prove the condition
(\ref{contr1}) for $l=1$.
To this end,  we use one of the equivalent norms in the space  $BC^1(\bbbr;\bbbr^n)$, namely the norm
\begin{equation}\label{beta}
	\|v\|_{\sigma} = \|v\|_{BC(\bbbr;\bbbr^n)} + \sigma \|\partial_t v\|_{BC(\bbbr;\bbbr^n)},
\end{equation}
where a positive constant $\sigma$ will be specified later. 
We are done if we  prove that  there exist  constants $\sigma<1$ and $\nu<1$ such that
\begin{equation}\label{*}
\|G_0 v\|_{BC(\bbbr;\bbbr^n)}+\sigma\left\|\frac{d}{dt} [G_0  v]\right\|_{BC(\bbbr;\bbbr^n)}
\le \nu\left(\|v\|_{BC(\bbbr;\bbbr^n)} + \sigma\|v^\prime\|_{BC(\bbbr;\bbbr^n)}\right)
\end{equation}
for all $ v \in  BC^1(\bbbr;\bbbr^n)$. For given $v\in BC^1(\bbbr;\bbbr^n)$
and $j\le n$, it holds
\begin{equation}\label{dtG0R}
	 \frac{d}{dt} [G_0 v]_j(t) =  [W v]_j(t)    +  [G_1v^\prime]_j(t),
\end{equation}
where the operator $G_1$ is given by  (\ref{Ci}) and the operator $W \in \mathcal L(BC(\bbbr; \bbbr^n))$ is defined by
$$[W v]_j(t) =  \partial_tc_j(x_j,1 - x_j,t) \sum\limits_{k=1}^nr_{jk}v_k(\omega_j(x_j, 1 - x_j, t)),
\quad j \le n. $$
Because of the assumption (\ref{G_i}),  we have  the bounds $\left\|G_0\right\|_{{\cal L}(BC(\bbbr; \bbbr^n))}<1$ and $\left\|G_1\right\|_{{\cal L}(BC(\bbbr; \bbbr^n))}<1$. Hence, we can fix $\sigma<1$ such that  $\left\|G_0\right\|_{{\cal L}(BC(\bbbr; \bbbr^n))}+
\sigma\left\|W\right\|_{{\cal L}(BC(\bbbr; \bbbr^n))}<1$. Then the number
$$
\nu=\max\left\{\left\|G_0\right\|_{{\cal L}(BC(\bbbr; \bbbr^n))}+
\sigma\left\|W\right\|_{{\cal L}(BC(\bbbr; \bbbr^n))}, \left\|G_1\right\|_{{\cal L}(BC(\bbbr; \bbbr^n))}
\right\}<1
$$
makes the estimate (\ref{*}) correct.
Indeed,
$$
\begin{array}{rcl}
	\|G_0v\|_{\sigma}
	&\le&\displaystyle
	\left\| G_0v\right\|_{BC(\bbbr;\bbbr^n)}
	+\sigma \| W v\|_{BC(\bbbr;\bbbr^n)}
	+ \sigma \left\|G_1 v^\prime\right\|_{BC(\bbbr;\bbbr^n)}\\ [2mm]
	&\le& \displaystyle\nu\left(\|v\|_{BC(\bbbr;\bbbr^n)} + \sigma\left\|v^\prime\right\|_{BC(\bbbr;\bbbr^n)}\right)=  \nu\| v\|_{\sigma},
\end{array}
$$
as desired.
It follows that
\begin{equation} \label{ots441}
	\begin{array}{rcl}
		\|(I - G_0)^{-1}v\|_{BC_t^1(\bbbr;\bbbr^n)} &\le& \displaystyle\frac{1}{\sigma}\|(I - G_0)^{-1}v\|_{\sigma} \le \frac{1}{\sigma(1 - \nu)}\|v\|_{\sigma}\\
		&\le& \displaystyle \frac{1}{\sigma(1 - \nu)}\|v\|_{BC_t^1(\bbbr;\bbbr^n)}.
	\end{array}
\end{equation}
This implies that the system (\ref{simpl1})
can be written in the form
\begin{eqnarray} \label{z1}
	z=(I-G_0)^{-1}\tilde g,
\end{eqnarray}
where $\tilde g(t)=\left(g_1(1,t),\dots,g_m(1,t),g_{m+1}(0,t),\dots,g_n(0,t)\right)$.
Substituting (\ref{z1}) into (\ref{simpl}), we obtain
\begin{equation} \label{uj}
	\begin{array}{rcl}
		u_j(x,t)&=&\displaystyle c_j(x_j,x,t)\sum\limits_{k=1}^nr_{jk}\left[(I-G_0)^{-1}\tilde g\right]_k(\omega_j(x_j,x,t))+g_j(x,t)\\ [5mm]
		&=&\left[(I-C)^{-1}g\right]_j(x,t)
	\end{array}
\end{equation}
for all $ j \le n$. 
Combining  (\ref{uj}) with (\ref{ots441}) gives the estimate
\begin{equation}\label{I-C--1}
	\|(I-C)^{-1}\|_{{\cal L}(BC^1_t(\Pi;\bbbr^n))}\le 1+\frac{1}{\sigma(1 - \nu)}\|C\|_{{\cal L}(BC^1_t(\Pi;\bbbr^n))}
\end{equation}
that we will use later on.
The proof of the second part of the claim is therewith complete. 
\end{subproof}

\begin{claim} \label{cl2}The operator $I-C-D$
	is Fredholm operator of index zero from $C_{per}(\Pi;\bbbr^n)$ to itself.
\end{claim}

\begin{subproof}
By Claim \ref{111},  the operator $I-C$ is bijective from  $C_{per}(\Pi;\bbbr^n)$ to itself. 
Hence, 
  $I- C -D: C_{per}(\Pi;\bbbr^n)\to C_{per}(\Pi;\bbbr^n)$ is Fredholm  opeeator of index zero 
if and only if $I-(I- C)^{-1} D: C_{per}(\Pi;\bbbr^n)\to C_{per}(\Pi;\bbbr^n)$ is Fredholm operator of index zero.
Accordingly  to the Fredholmness criterion stated in \cite[Proposition 1, pp. 298--299]{Zeidler}, the last statement is true  if there exist linear continuous operators $P_l, P_r: C_{per}(\Pi;\bbbr^n)\to C_{per}(\Pi;\bbbr^n)$  and compact operators
$Q_l,Q_r: C_{per}(\Pi;\bbbr^n)\to C_{per}(\Pi;\bbbr^n)$ such that
$$
P_l\left[I-(I-C)^{-1}D\right]=I+Q_l\quad\mbox{and}\quad \left[I-(I-C)^{-1} D\right]P_r=I+Q_r.
$$ 
Put
$
P_l=P_r=I+(I-C)^{-1}D.
$
Then we have
$$
\begin{array}{rcl}
P_l\left[I-(I-C)^{-1}D\right]=\left[I-(I-C)^{-1}D\right]P_r=I-(I-C)^{-1}D(I-C)^{-1}D.
\end{array}
$$
Accordingly to \cite[Proposition 1, pp. 298--299]{Zeidler}, the proof will, therefore, be done if we prove the compactness of the operator $(I-C)^{-1}D(I- C)^{-1} D$ or, on the account of the 
boundedness of  $(I-C)^{-1}$, the compactness of the operator $ D(I-C)^{-1} D: C_{per}(\Pi;\bbbr^n)\to C_{per}(\Pi;\bbbr^n)$. With this aim, we rewrite this operator
as follows:
$$
D(I-C)^{-1} D= D\left[I+C(I- C)^{-1}\right]D=D^2+ D C(I- C)^{-1}D.
$$
Because of the boundedness of  $(I-C)^{-1} D$, 
we are done if we prove  the compactness of 
$D^2$ and $ D C$. This follows from  \cite[Equation (4.2)]{KR}, where
it is shown that the operators
$D^2$ and $DC$ map continuously $C_{per}(\Pi;\bbbr^n)$ into $C_{per}^1(\Pi;\bbbr^n)$. The main idea of the Fredholmness proof above lies in the fact that we regularize the operator $I-(I-C)^{-1}D$  by means of the operators $P_l=P_r=I+(I-C)^{-1}D$.
The main point of this regularisation is that the Fredholmness of the regularized and original operators are equivalent. 
Specifically, the operator $D$, being a partial integral operator,  is not compact. At the same time,  the ``regularized'' operators $D^2$ and $DC$ are already integral operators and, hence,  are compact.
This completes the proof of the claim.
\end{subproof}	

\begin{claim} \label{3} The operator $I-C-D: C_{per}(\Pi;\bbbr^n)\to C_{per}(\Pi;\bbbr^n)$ 
	is injective.
\end{claim}

\begin{subproof}
Assume, contrary to this claim, 
that, for given $f$ and $h$, the equation (\ref{oper}) has two continuous periodic solutions, say $u^1$ and $u^2$. Then the difference
$u^1-u^2$ satisfies the equation (\ref{oper}) with $f=0$ and $h=0$ and, consequently, the following iterated equation:
\begin{eqnarray} \label{oper0}
u^1-u^2=C(u^1-u^2)+[DC + D^2](u^1-u^2).
\end{eqnarray}

As mentioned above, the operators
$D^2$ and $DC$ map continuously $C_{per}(\Pi;\bbbr^n)$ into $C_{per}^1(\Pi;\bbbr^n)$. Moreover, accordingly to Claim \ref{111},
the operator $(I-C)^{-1}$ maps $C_{per}(\Pi;\bbbr^n)\cap BC_t^1(\Pi;\bbbr^n)$ into itself. Hence, the equation (\ref{oper0}) implies that
$u^1-u^2\in BC_{t}^1(\Pi;\bbbr^n)$.  Using additionally that $u_1-u_2$ is a distributional solution to (\ref{1lin0}),
we get that $u^1-u^2\in C_{per}^1(\Pi;\bbbr^n)$ and, therefore,   that 
$u^1-u^2$ is a classical time-periodic solution to (\ref{1lin0})--(\ref{2p}). 

Let $s\in\bbbr$ be arbitrary fixed. Then the function $u^1(\cdot,s)-u^2(\cdot,s)$ belongs to $C([0,1];\bbbr^n)$ and, hence, to $L^2((0,1);\bbbr^n)$. By Definition \ref{L2}, the function $u^1-u^2$ is also the $L^2$-generalized solution to the problem (\ref{1lin0}), (\ref{2p}), (\ref{in}). Due to the exponential decay
estimate (\ref{stabily}),  
$u^1-u^2$ decays to zero as $t\to\infty$. As $u^1-u^2$ is $T$-periodic, this yields that
$u^1(x,t)= u^2(x,t)= 0$ for all $x\in[0,1]$ and $t\in \bbbr$. The injectivity is therewith proved.
\end{subproof}
		
\begin{claim} \label{4} The operator $I-C-D: C_{per}(\Pi;\bbbr^n)\to C_{per}(\Pi;\bbbr^n)$ 
	is bijective and, hence, the problem (\ref{1lin0})--(\ref{2p}) has a unique
	continuous time-periodic solution, say~$u^*$.
	 Moreover, $u^*$  satisfies the following a priori
	estimate: 
	\begin{equation} \label{BC}
	\|u^*\|_{BC(\Pi;\bbbr^n)} \le K\left(\| f \|_{BC(\Pi;\bbbr^n)} + 
	\| h \|_{BC(\bbbr;\bbbr^n)} \right)
	\end{equation}	
	with a constant $K$ not depending on $f$ and $h$.	
\end{claim}

\begin{subproof}
By Claim \ref{cl2}, the problem (\ref{oper})
satisfies the Fredholm Alternative in $C_{per}(\Pi;\bbbr^n)$.
Combining this with Claim~\ref{3} gives the desired bijectivity of the operator $I-C-D: C_{per}(\Pi;\bbbr^n)\to C_{per}(\Pi;\bbbr^n)$. In other words, the problem (\ref{1lin0})--(\ref{2p}) has a unique
continuous time-periodic solution $u^*$ given by the formula
$$
u^*=[I-C-D]^{-1}(Ph+Sf).
$$
It follows also that the map $(f,h)\in C_{per}(\Pi;\bbbr^n)\times C_{per}(\bbbr;\bbbr^n) \to 
u\in C_{per}(\Pi;\bbbr^n)$ is continuous. The a priori estimate 
(\ref{BC}) now easily follows.

\end{subproof}

\begin{claim} \label{5} The  continuous time-periodic solution $u^*$ to the problem (\ref{1lin0})--(\ref{2p}) belongs to $C^1(\Pi;\bbbr^n)$
	and satisfies the a priori estimate (\ref{est211}).
\end{claim}

\begin{subproof}
We iterate the equation (\ref{oper}) by substituting (\ref{oper}) into the second summand of this equation, and 
afterwards apply  the operator $(I-C)^{-1}$ to both sides of the obtained equation.  The resulting equation  for $u=u^*$ then reads 
\begin{eqnarray} \label{oper2}
u^*=(I-C)^{-1} \left[
DC+D^2
\right]u^*+(I-C)^{-1}\left[I+D
\right]\left[
Ph+Sf
\right].
\end{eqnarray}  
We again use  the smoothing property of the operators $DC$ and $D^2$, that map $BC$ into  $BC^1_t$ and satisfy the estimate
\begin{eqnarray} \label{ots3}
\left\|\partial_t\left[(DC + D^2)u\right]\right\|_{BC(\Pi;\bbbr^n)} \le K_{1} \|u\|_{BC(\Pi;\bbbr^n)}
\end{eqnarray}
for a constant $K_{1} $ which is uniform for all  $u\in BC(\Pi;\bbbr^n)$
(in particular, for $u=u^*$). Since $u^*$ is a distributinal
solution to (\ref{1lin0}),
this entails that $u^*$
is $BC^1$-smooth and  is, therefore, the classical solution to the problem 
(\ref{1lin0})--(\ref{2p}). The a priori estimate (\ref{est211}) then easily follows 
from the operator equation (\ref{oper2}), from the  estimates 
(\ref{I-C--1}), (\ref{BC}), (\ref{ots3}), and from the differential equation~(\ref{1lin0}).  
This finishes the proof of the claim.
\end{subproof}
The proof of Part ${\bf (i)}$ of the theorem is, therefore,
 complete. 
Now we assume that the conditions of  Part ${\bf (ii)}$ of the theorem are fulfilled.
	
\begin{claim} \label{6}
	The operator
	$I-C: C_{per}(\Pi;\bbbr^n)\cap BC^2_t(\Pi;\bbbr^n)\to C_{per}(\Pi;\bbbr^n)\cap BC^2_t(\Pi;\bbbr^n)$ is bijective.
\end{claim}

\begin{subproof}
The proof  follows the same line as the proof of Claim~\ref{111}, but now we have to show that the system (\ref{simpl}) is uniquely solvable in $BC^2_t (\Pi;\bbbr^n)$ or, the same, that the
system (\ref{simpl1}) is uniquely solvable in $BC^2 (\bbbr;\bbbr^n)$. To this end,
we  additionally use 
the condition (\ref{contr1})  for $l=2$.
Now we use  one of the equivalent norms in the space  $BC_t^2(\bbbr;\bbbr^n)$, namely the norm
\begin{equation}\label{beta2}
	\|v\|_{\sigma_1,\sigma_2} = \|v\|_{BC(\bbbr;\bbbr^n)} + \sigma_1 \|\partial_t v\|_{BC(\bbbr;\bbbr^n)}+ \sigma_2 \|\partial_t^2 v\|_{BC(\bbbr;\bbbr^n)},
\end{equation}
where the constants $\sigma_1<1$ and
$\sigma_2<1$ have to be chosen to satisfy the inequality
$$
\begin{array}{rr}
	\displaystyle
	\|G_0 v\|_{BC(\bbbr;\bbbr^n)}+\sigma_1\left\|\frac{d}{dt} [G_0v]\right\|_{BC(\bbbr;\bbbr^n)}+\sigma_2\left\|\frac{d^2}{dt^2} [G_0  v]\right\|_{BC(\bbbr;\bbbr^n)}
	\\ [5mm]
	\le \nu\left(\|v\|_{BC(\bbbr;\bbbr^n)} + \sigma_1\|v^\prime\|_{BC(\bbbr;\bbbr^n)}+ \sigma_2\|v^{\prime\prime}\|_{BC(\bbbr;\bbbr^n)}\right)
\end{array}
$$
for a constant  $\nu<1$ and all $ v \in  BC^2(\bbbr;\bbbr^n)$. To show the existence of such constants, we differentiate the function 
(\ref{dtG0R})  and get
$$
\begin{array}{rcl}
\displaystyle	\frac{d^2}{dt^2} [G_0 v]_j(t) &=& \displaystyle \partial_t^2 c_j(x_j,1 - x_j,t)\sum\limits_{k=1}^nr_{jk}v_k(\omega_j(x_j, 1 - x_j, t))   +  [G_2v^{\prime\prime}]_j(t)\\ [4mm]
&&+ \left(\partial_t c_j(x_j,1 - x_j,t)\partial_t\omega_j(x_j;1 - x_j,t)+\partial_t c_j^1(x_j,1 - x_j,t)\right)\\ [3mm]
&&\times
\displaystyle\sum\limits_{k=1}^nr_{jk}v_k^\prime(\omega_j(x_j, 1 - x_j, t)) .
\end{array}
$$
Further we proceed similarly to the proof of  Claim~\ref{111}, but now
taking additionally into account the condition (\ref{G_i}) for $i=2$.
Moreover, similarly to (\ref{I-C--1}),  we
 derive the estimate
\begin{equation}\label{BC2t}
	\|(I-C)^{-1}\|_{{\cal L}(BC^2_t(\Pi;\bbbr^n))}\le 1+\frac{1}{\max\{\sigma_1,\sigma_2\}(1 - \nu)}\|C\|_{{\cal L}(BC^2_t(\Pi;\bbbr^n))}.
\end{equation}
\end{subproof}
		
\begin{claim} \label{7} The classical  time-periodic solution $u^*$ to the problem (\ref{1lin0})--(\ref{2p}) belongs to $C^2_{per}(\Pi;\bbbr^n)$
	and satisfies the a priori estimate (\ref{L6}).
\end{claim}

\begin{subproof}
One can easily show that, under the regularity assumptions on the coefficients of  (\ref{1lin0}), the operators $DC$
and $D^2$ have the smoothing property of the type that they map $BC^1_{t}(\Pi;\bbbr^n)$ into $BC^2_{t}(\Pi;\bbbr^n)$.
Moreover, 
the following smoothing estimate is fulfilled:
\begin{eqnarray} \label{ots4} 
\left\|\partial_t^2\left[(DC + D^2)u\right]\right\|_{BC(\Pi;\bbbr^n)} \le K_{2} \|u\|_{BC^1_t(\Pi;\bbbr^n)}
\end{eqnarray}
for some $K_2>0$ and all $u\in BC^1_t(\Pi;\bbbr^n)$ (in particular, for $u=u^*$).
Since $u^*$ satisfies 
(\ref{oper2}), the claim now follows from Claim \ref{6}, the 
estimates 
 (\ref{est211}), (\ref{BC2t}), (\ref{ots4}), and the differential system~(\ref{1lin0})
 and its differentiations in $t$ and $x$.
\end{subproof}

The proof of the theorem is complete.	
		
\end{proof}

	\subsection{Lyapunov function and sufficient conditions for the robust exponential stability in $L^2$}\label{Lyapunovstability}
	
	In this section we consider general linear initial-boundary value problem of the type
	(\ref{1lin}), (\ref{2lin}), (\ref{in}). 
		Write $u = (u^1, u^2),$ where
	$u^1 = (u_1,\dots,u_m)$ and $u^2 = (u_{m+1},\dots,u_n).$
	We will also use the following notation:
	\begin{eqnarray} \label{linop3}
		z(t) = 
		\left[\begin{array}{c}
			u^1(1,t)  \\
			u^2(0,t) \\
		\end{array}\right], \ 
		J_0 = \left[
		\begin{array}{cc}
			R_{11}  & R_{12} \\
			0 &  I \\
		\end{array}
		\right], \ 
		J_1 = \left[
		\begin{array}{cc}
			I &  0 \\
			R_{21}  & R_{22} \\ 
		\end{array}
		\right],
	\end{eqnarray} 
	where
	$
	R_{11}=(r_{jk})_{j,k=1}^m, \ R_{12}=(r_{jk})_{j=1,k=m+1}^{m,n},\ R_{21}=(r_{jk})_{j=m+1,k=1}^{n,m},\ R_{22}=(r_{jk})_{j,k=m+1}^n,
	$
	and $r_{jk}$ are the  boundary coefficients from (\ref{eq:R}).
	Then the boundary conditions (\ref{2lin}) can be written in the form
	\begin{eqnarray} \label{linop2}
		u(0,t) = J_0 z(t), \quad u(1,t) = J_1 z(t).
	\end{eqnarray}
	
	\begin{thm} \label{L1}
		Let $a, b\in BC^1(\Pi;\bbbm_n)$.
		Assume that there exist  positive constants $\beta_i$,  $i\le 4$,  and a  diagonal matrix $V(x,t)=\diag(V_1(x,t),\dots,V_n(x,t))$  with continuously differentiable
		entries in $\Pi$
		such that the following conditions are satisfied: \\
		\vspace{1mm}
		${\bf (i)}$
		$\beta_1\|v\|^2\le  \langle Vv,v\rangle \le \beta_2\|v\|^2 $
		for all $(x,t) \in \Pi$ and $v \in \bbbr^n$,\\
		\vspace{1mm}
	${\bf (ii)}$ $\left\langle \left(\partial_t V  +  \partial_x(Va) - V b - b^TV \right) v,v\right\rangle < -\beta_3 \|v\|^2$
	for all $(x,t) \in \Pi$ and $v\in\bbbr^n$,\\
		\vspace{1mm}
		${\bf (iii)}$ 
		$\left\langle \left( J_0^T V(0,t)a(0,t) J_0  -  J_1^T V(1,t)a(1,t) J_1\right)v,v\right\rangle  <-\beta_4\|v\|^2$  for all $t  \in \bbbr$ and $v \in \bbbr^n$. \\
		\vspace{1mm}
		Then the problem  (\ref{1lin})--(\ref{2lin}) is robust exponentially stable in  $L^2((0,1),\bbbr^n)$.
	\end{thm}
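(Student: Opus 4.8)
The plan is to run a Lyapunov-functional argument with the weighted energy
$$
\mathcal{L}(t) = \int_0^1 \langle V(x,t)\, u(x,t),\, u(x,t)\rangle \dd x ,
$$
evaluated along solutions of (\ref{1lin}), (\ref{2lin}), (\ref{in}), and to show that conditions ${\bf(i)}$--${\bf(iii)}$ force $\mathcal{L}$ to decay exponentially, with rates depending only on $\beta_1,\beta_2,\beta_3$.

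First I would work with classical solutions, which exist for smooth initial data by \cite[Theorem 2.1]{ijdsde}. Differentiating $\mathcal{L}$ in $t$, substituting $\partial_t u = -a\partial_x u - bu$ from (\ref{1lin}), and using that $V$ and $a$ are diagonal (so $Va$ is symmetric), one integration by parts in $x$ gives
$$
\mathcal{L}'(t) = \int_0^1 \big\langle \big(\partial_t V + \partial_x(Va) - Vb - b^T V\big) u,\, u\big\rangle \dd x \; - \; \big[\langle V a u, u\rangle\big]_{x=0}^{x=1}.
$$
Inserting the boundary relations $u(0,t) = J_0 z(t)$, $u(1,t) = J_1 z(t)$ from (\ref{linop2}), the boundary term equals exactly $\langle (J_0^T V(0,t)a(0,t)J_0 - J_1^T V(1,t)a(1,t)J_1)\,z(t), z(t)\rangle$, which by ${\bf(iii)}$ is $\le -\beta_4\|z(t)\|^2 \le 0$. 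By ${\bf(ii)}$ the interior integral is $\le -\beta_3\int_0^1\|u\|^2\dd x$, and by the lower bound in ${\bf(i)}$, $\int_0^1\|u\|^2\dd x \ge \beta_2^{-1}\mathcal{L}(t)$. Hence $\mathcal{L}'(t) \le -(\beta_3/\beta_2)\,\mathcal{L}(t)$, so Gronwall's inequality yields $\mathcal{L}(t) \le e^{-(\beta_3/\beta_2)(t-s)}\mathcal{L}(s)$; combined with both bounds in ${\bf(i)}$ this is the decay estimate (\ref{stabily}) with $M = (\beta_2/\beta_1)^{1/2}$ and $\alpha = \beta_3/(2\beta_2)$. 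To reach all $\varphi \in L^2$, I would take an approximating sequence $\varphi^l \to \varphi$ as in Definition \ref{L2} and pass to the limit in the estimate, using the uniform-in-$t$ convergence of the classical solutions $u^l$ there.

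For the robustness part the key observation is that ${\bf(i)}$--${\bf(iii)}$ are open conditions, stable under $BC^1$-small perturbations of $a$ and $b$: condition ${\bf(i)}$ does not involve $a,b$ at all; ${\bf(ii)}$ depends on $b$ and on $\partial_x a$ (through $\partial_x(Va) = (\partial_x V)a + V\partial_x a$), and the quadratic form changes by an amount controlled by $\|\tilde a - a\|_{BC^1} + \|\tilde b - b\|_{BC}$; ${\bf(iii)}$ depends continuously on the boundary values of $a$. Hence there is $\gamma>0$ such that every pair $\tilde a, \tilde b \in BC^1(\Pi;\bbbm_n)$ with $\|\tilde a - a\|_{BC^1} \le \gamma$ and $\|\tilde b - b\|_{BC^1}\le\gamma$ still satisfies ${\bf(i)}$--${\bf(iii)}$ with the same $V$ and the same $\beta_1,\beta_2$, and with $\beta_3,\beta_4$ halved. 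Repeating the previous argument verbatim for the $(\tilde a,\tilde b)$-problem produces (\ref{unif_stabily}) with $M = (\beta_2/\beta_1)^{1/2}$ and $\alpha = \beta_3/(4\beta_2)$, uniformly over all such perturbations; that is precisely robust exponential stability in $L^2$.

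The main obstacle I expect is not the Lyapunov computation, which is routine once the diagonal structure is used, but two uniformity points: justifying the differentiation of $\mathcal{L}$ and the integration by parts at the regularity the existence theory provides (handled through the density argument and Definition \ref{L2}), and, more importantly, making the dependence of condition ${\bf(ii)}$ on $a$ quantitatively explicit — in particular checking that $\partial_x\tilde a$ enters the quadratic form only linearly, so that a single $\gamma$ works uniformly for the whole $\gamma$-ball of perturbations and the resulting constants $M,\alpha$ are genuinely independent of $\tilde a,\tilde b$.
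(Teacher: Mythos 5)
Your proposal is correct and follows essentially the same route as the paper: the identical Lyapunov functional $\int_0^1\langle Vu,u\rangle\dd x$, the same integration by parts producing the interior quadratic form of ${\bf(ii)}$ plus the boundary form of ${\bf(iii)}$ via (\ref{linop2}), the same Gronwall step with rate $\beta_3/\beta_2$, and the same reduction to classical solutions with a density passage to $L^2$ data. Your treatment of robustness is in fact slightly more explicit than the paper's one-line remark that ${\bf(ii)}$ and ${\bf(iii)}$ are stable under small perturbations of $a$ and $b$.
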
  
	\begin{proof} 
	Due to	the continuous embedding	$C_0^1([0,1];\bbbr^n)\hookrightarrow L^2\left((0,1);\bbbr^n\right)$
	and  Definition \ref{L2}
		of the $L^2$-generalized solution, it suffices to prove the estimate  (\ref{unif_stabily})  for any 
		$\varphi\in C_0^1([0,1];\bbbr^n)$. Fix an arbitrary $\varphi\in C_0^1([0,1];\bbbr^n)$ and denote by $u$
		the classical solution to the problem  (\ref{1lin}), (\ref{2lin}), (\ref{in}) (see \cite[Theorem 2.1]{ijdsde}).

		Set ${\cal V}(t) = \int_0^1 \langle V(x,t)u(x,t),u(x,t)\rangle  \dd x$.
		The derivative of ${\cal V}(t)$ along the trajectories of the system (\ref{1lin})--(\ref{2lin}) is calculated directly, as follows:
		$$
		\begin{array}{rcl}
			\displaystyle	 \frac{d {\cal V}}{d t}& =& \displaystyle\int_0^1 \left[\left\langle \partial_t V u,u\right\rangle + \left\langle V\partial_t u,u\right\rangle+
			\langle Vu, \partial_t u\rangle\right]\dd x = \\ [3mm]
			& =& \displaystyle \int_0^1 \left[\left\langle\partial_t V u,u\right\rangle- \left\langle V a\partial_x u,u\right\rangle- \left\langle V b u,u\right\rangle-
			\left\langle V u, a\partial_x u\right\rangle- \left\langle V u, b u\right\rangle\right]\dd x.
		\end{array}
		$$
		Since 
		\begin{eqnarray*}
			& & \displaystyle\int_0^1 \partial_x \langle Vau,u\rangle \dd x = \langle Vau,u\rangle\Bigl|_0^1 \\  
			& & = \int_0^1 \langle\partial_x(Va)u,u\rangle \dd x +
			\int_0^1 \langle Va  \partial_x u,u\rangle d x +  \int_0^1\langle Vau, \partial_x u\rangle \dd x,
		\end{eqnarray*}
		the following equality holds:
		\begin{eqnarray*}
			\langle Vau,u\rangle\Bigl|_0^1 - \int_0^1 \langle \partial_x(Va)u,u\rangle \dd x =
			\int_0^1 \langle Va  \partial_x u,u\rangle \dd x +  \int_0^1\langle Vu, a\partial_x u\rangle \dd x.
		\end{eqnarray*} 
		It follows that
		$$
		\begin{array}{rcl}
			\displaystyle		\frac{d {\cal V}}{d t} &=&\displaystyle -  \langle Vau,u\rangle\Bigl|_0^1 + \int_0^1 \langle
			\left[\partial_t V +  \partial_x(Va)-V b -  b^TV \right] u,u\rangle\dd x.
		\end{array}  
		$$
		Taking into account (\ref{linop2}), the boundary term in the last equation is computed as follows: 
		$$
		\begin{array}{rcl}
			- \langle Vau,u\rangle\Bigl|_0^1
			&=& \left\langle V(0,t)a(0,t)u(0,t), u(0,t)\right\rangle -  
			\langle  V(1,t)a(1,t)u(1,t), u(1,t)\rangle  \\ [3mm]
			&	 = &\langle V(0,t)a(0,t) J_0 z(t), J_0 z(t)\rangle -  \langle V(1,t)a(1,t)J_1z(t), J_1z(t)\rangle \\ [3mm]
			&	=&
			\left\langle\left[ J_0^T  V(0,t)a(0,t)J_0  -  J_1^T V(1,t)a(1,t) J_1\right]z(t),z(t)\right\rangle.
		\end{array}
		$$
		By Assumption ${\bf (iii)}$, $- \langle Vau,u\rangle\Bigl|_0^1\le 0$. Then, thanks to the 
		assumptions ${\bf (i)}$ and~${\bf (ii)}$, it holds 
		$$
		 \frac{d {\cal V}(t)}{d t} \le - \beta_3  \|u(\cdot,t)\|_{L^2((0,1),\bbbr^n)}^2 \le - \frac{\beta_3}{\beta_2} {\cal V}(t)\quad\mbox{for all } t\in\bbbr.
		 $$
		Consequently,  ${\cal V}(t) \le {\cal V}(s) e^{- \frac{\beta_3}{\beta_2}(t - s)}$ for all $ t \ge s$.
		Now, using  the assumption ${\bf (i)}$, we derive  the  inequality
		\begin{equation}\label{en1}
			\|u(t)\|_{L^2((0,1),\bbbr^n)}^2 \le \frac{\beta_2}{\beta_1} 
			e^{- \frac{\beta_3}{\beta_2}(t - s)} \|u(s)\|_{L^2(0,1),\bbbr^n)}^2 \quad\mbox{for all } t \ge s.
		\end{equation} 
		Since  the conditions ${\bf (ii)}$ and ${\bf (iii)}$  are stable with respect to  small perturbations of 
		$a$ and $b$, the estimate (\ref{en1}) is fulfilled for sufficiently small perturbations of 
		$a$ and $b$. This comletes the proof.
				\hide{
		On the account of (\ref{eps}), the following simple inequalities are true: 
		$$
		\|v(t)\|_{L^2((0,1),\bbbr^n)}^2\le \|u(t)\|_{L^2((0,1),\bbbr^n)}^2+\varepsilon\quad\mbox{for all } s\le t \le t_1,
		$$
		and
		$$
		\|u(s)\|_{L^2((0,1),\bbbr^n)}^2\le \|\varphi\|_{L^2((0,1),\bbbr^n)}^2+\varepsilon.
		$$
		Combining the last two inequalities with (\ref{en1}) gives 
		$$
		\|v(t)\|_{L^2((0,1),\bbbr^n)}^2\le \frac{\beta_2}{\beta_1} 
		e^{- \frac{\beta_3}{\beta_1}(t - s)} \|\varphi\|_{L^2(0,1),\bbbr^n)}^2 +2\varepsilon\quad\mbox{for all } s\le t \le t_1.
		$$
		As $\varepsilon$  and $t_1>s$ are arbitrary fixed, the proof is complete.
	}
	\end{proof}

	\begin{ex}\label{ex1}
		Here we show that the  conditions  of Theorem \ref{L1} are consistent. More precisely, our aim
		is to show that the robust exponential stability assumption does not contradict to the dissipativity conditions (\ref{G_i}).
				
		Consider the $2\times 2$-hyperbolic system
		\begin{eqnarray*}
			& & \partial_t u_1 + (2-x)\,\partial_x u_1 + 2\sin t \,u_2 = 0\\
			& & \partial_t u_2 -(2+\sin t)\,\partial_x u_2 - \sin t\, u_1 + 2 u_2 = 0
		\end{eqnarray*}
		with the boundary conditions
		$$u_1(0,t) = \frac{1}{e^3}u_1(1,t) + \frac{1}{2e^3}u_2(0,t), \quad
		u_2(1,t) = \frac{1}{e^3}u_1(1,t) + \frac{1}{e^3}u_2(0,t).$$
		
		Let us first show that the  Lyapunov function $V(x,t)$ as in Theorem \ref{L1} can be chosen to be the unit diagonal matrix 
		$V(x,t) = \diag(1,1)$. In other words,  we intend to show that the matrix $V$ satisfies  all conditions of Theorem \ref{L1}.
		
		The condition ${\bf (i)}$ is evident. To check ${\bf (ii)}$,  we calculate the matrix 
		\begin{eqnarray*}
		\partial_t V  +  \partial_x(Va) - V b - b^TV &=&	\left[
			\begin{array}{cc}
				-1  & 0 \\
				0 &  0\\
			\end{array}
			\right] -
			\left[
			\begin{array}{cc}
				0 & 2\sin t \\
				-\sin t &  2 \\
			\end{array}
			\right]  -
			\left[
			\begin{array}{cc}
				0  & -\sin t \\
				2\sin t &  2 \\
			\end{array}
			\right]  \\ &=&			\left[
			\begin{array}{cc}
				-1  & -\sin t \\
				-\sin t &  -4 \\
			\end{array}
			\right].
		\end{eqnarray*} 
	Hence, the matrix $\left(\partial_t V  +  \partial_x(Va) - V b - b^TV \right) $ is uniformly negative definite, which means that the condition ${\bf (ii)}$ 	of Theorem \ref{L1} is satisfied.
	
	To verify the condition ${\bf (iii)}$, we calculate the matrix
		\begin{eqnarray*}
&J_0^T V(0,t)a(0,t) J_0  -  J_1^T V(1,t)a(1,t) J_1 =	\left[
		\begin{array}{cc}
			r_{11}  & 0 \\
			r_{12} &  1\\
		\end{array}
		\right] \times
		\left[
		\begin{array}{cc}
			2 & 0 \\
			0 &\  -2-\sin t \\
		\end{array}
		\right]  \times
		\left[
		\begin{array}{cc}
			r_{11}	 & 	r_{12} \\
			0 &  1 \\
		\end{array}
		\right] & \\ &-			\left[
		\begin{array}{cc}
			1  & 	r_{21} \\
			0& 	r_{22} \\
		\end{array}
		\right]     
	\times
	\left[
	\begin{array}{cc}
		1 & 0 \\
		0 &\  -2-\sin t \\
	\end{array}
	\right]  \times
	\left[
	\begin{array}{cc}
		1	 & 0 \\
		r_{21}  & r_{22}  \\
	\end{array}
	\right] & \\ & =
		\left[
	\begin{array}{cc}
	2	r_{11} ^2+(2+\sin t)r_{21} ^2-1&\ 2	r_{11}r_{12}+(2+\sin t)r_{21}r_{22}\\
		2	r_{11}r_{12}+(2+\sin t)r_{21}r_{22}&\ 2	r_{12} ^2+(2+\sin t)r_{22} ^2-2
	\end{array}
	\right] .
	&
\end{eqnarray*}
Since $r_{11}=r_{21}=r_{22}=1/e^3$ and $r_{12}=1/2e^3$, 
	the matrix $J_0^T V(0,t)a(0,t) J_0  -  J_1^T V(1,t)a(1,t) J_1$ is strictly uniformly negative definite,
	which is the scope of the condition ${\bf (iii)}$. 
	 
	It remains to check that the dissipativity conditions (\ref{G_i})  are fulfilled for $i=0,1,2$. Indeed, for any $i=0,1,2$ and $\psi\in BC(\bbbr;\bbbr^n)$ with $\|\psi\|_{BC(\bbbr;\bbbr^n)}=1$,
	it holds
	$$
	\begin{array}{rcl}
		\left|[G_i\psi]_2(t)\right|&\le&
		\displaystyle\exp\left\{\int_{0}^{1}\left[\frac{2}{2+\sin(\omega_j(\eta))}+i\frac{\cos(\omega_j(\eta)) }{(2+\sin(\omega_j(\eta))^2}\right]\dd \eta\right\}\\ [5mm]
		&&\times\left\|R\right\|_{\mathcal{L}(BC([0,1];\bbbr^n);\bbbr^n)}
		\le e^2\left\|R\right\|_{\mathcal{L}(BC([0,1];\bbbr^n);\bbbr^n)}<1,
			\end{array}
	$$
	as desired. Analogously one checks the dissipativity condition (\ref{G_i}) for  $[G_i\psi]_1(t)$.

	\end{ex}

	 \subsection{Perturbation theorem}\label{perturbation}
In this section we prove that the time-periodic solution to the problem (\ref{1lin0})--(\ref{2p}) survives under sufficiently small perturbations
of the coefficients $a$ and $b$ and that the inverse operator is bounded uniformly in those perturbations. With this aim, we  consider the
perturbed system
\begin{equation}\label{1p}
	\partial_tu + \tilde a(x,t)\partial_x u+ \tilde b(x,t) u= f(x,t), \quad x\in(0,1),
\end{equation}
where
$\tilde a=\diag(\tilde a_1,\dots,\tilde a_n)$ and
$\tilde b=(\tilde b_{jk})_{j,k=1}^n$ are matrices of $T$-periodic real-valued functions.

\begin{thm} \label{prop3}
	Assume that
there exists $\gamma>0$ such that the problem 
(\ref{1lin})--(\ref{2lin}) is robust exponentially stable in $L^2$
 in the sense of Definition \ref{stab} $({\bf ii})$.
Moreover, assume that the condition (\ref{eq:h1}) is fulfilled
with $\tilde a$    in place of $a$  for all $\tilde a$ with $\|\tilde a - a\|_{BC^2(\Pi;\bbbm_n)}\le \gamma$.
	Then the following is true.
	
		$(\bf i)$
Let the conditions (\ref{G_i})
be fulfilled  for $i=0, 1$. Then there exists $\gamma_1 \le\gamma$ such that, for given 	
$ f\in C_{per}(\Pi;\bbbr^n)\cap BC^1_t(\Pi;\bbbr^n)$,  $h\in C^1_{per}(\bbbr;\bbbr^n)$, and	$\tilde a, \tilde b \in C_{per}^2(\Pi,\bbbm_n)$ with $\|\tilde a - a\|_{BC^2(\Pi;\bbbm_n)} \le \gamma_1$ and $ \|\tilde b - b\|_{BC^2(\Pi;\bbbm_n)} \le \gamma_1, $
 the problem (\ref{1p}), (\ref{2p}) has a
unique classical $T$-periodic  solution, say $\tilde u$. Moreover,
$\tilde u$  satisfies the uniform a priori estimate
	\begin{equation} \label{est21p}
	\|\tilde u\|_{BC^1(\Pi;\bbbr^n)} \le L_3\left(\| f \|_{BC_t^1(\Pi;\bbbr^n)} + 
	\| h \|_{BC^1(\bbbr;\bbbr^n)} \right)
	\end{equation}
	with a constant $L_3$ not depending on $\tilde a, \tilde b$, $f$, and $h$.

	$(\bf ii)$
	Let the conditions (\ref{G_i})
	be fulfilled  for $i=0,1,2$. Then there exists $\gamma_1 \le\gamma$ such that, for given 	
	 $f \in C_{per}^1(\Pi;\bbbr^n)\cap BC^2_t(\Pi;\bbbr^n)$, $h \in C_{per}^2(\bbbr,\bbbr^n)$, and
	$\tilde a, \tilde b \in C_{per}^2(\Pi;\bbbm_n)$ with $\|\tilde a - a\|_{BC^2(\Pi;\bbbm_n)} \le \gamma_1$ and $ \|\tilde b - b\|_{BC^2(\Pi;\bbbm_n)} \le \gamma_1 $, the classical solution $\tilde u$	
	to the problem 	(\ref{1p}), (\ref{2p}) belongs to $C_{per}^2(\Pi,\bbbr^n)$	(see Theorem \ref{lin-smooth} $({\bf ii})$) and satisfies the uniform a priori estimate 
	\begin{equation} \label{est22p}
	\|\tilde u\|_{BC^2(\Pi;\bbbr^n)} \le L_4\left(\| f \|_{BC^1(\Pi;\bbbr^n)} + \|\partial_t^2 f \|_{BC(\Pi;\bbbr^n)}
	+ \| h \|_{BC^2(\bbbr;\bbbr^n)} \right)
	\end{equation}
	with a constant $L_4$ not depending on $\tilde a, \tilde b$, $f$, and $h$.
	
\end{thm}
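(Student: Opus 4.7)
The plan is to reduce Theorem \ref{prop3} to an application of Theorem \ref{lin-smooth} for the perturbed coefficients $(\tilde a, \tilde b)$, and then to argue that the constants in the a priori estimates of Theorem \ref{lin-smooth} can be chosen uniformly over a small $BC^2$-ball around $(a,b)$. Once those uniform constants are in hand, (\ref{est21p}) and (\ref{est22p}) follow immediately from (\ref{est211}) and (\ref{L6}).

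For the first step, fix $\gamma_1\le\gamma$ to be chosen small. I would verify the three hypotheses of Theorem \ref{lin-smooth} for $(\tilde a,\tilde b)$. The strict hyperbolicity (\ref{eq:h1}) with $\tilde a$ in place of $a$ is assumed. The $L^2$-exponential stability of the perturbed homogeneous problem is precisely what robust exponential stability supplies (Definition \ref{stab}(ii)), since $BC^2$-closeness implies $BC^1$-closeness. The non-resonance conditions (\ref{G_i}) for the perturbed operators $\tilde G_i$ are stable under small perturbations: by (\ref{char}), (\ref{cd}), and (\ref{Ci}), the characteristics $\omega_j$ and the weights $c_j^i$ depend continuously on $(\tilde a,\tilde b)$ in the $BC^2$-topology, and hence so do the operators $\tilde G_i\in {\cal L}(BC(\bbbr;\bbbr^n))$. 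Since $\|G_i\|<1$ originally, shrinking $\gamma_1$ yields $\|\tilde G_i\|<1$ for the required $i$. Theorem \ref{lin-smooth} then produces a unique classical $T$-periodic solution $\tilde u$ of the claimed regularity.

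The nontrivial part is the uniformity of the estimates. The most direct route starts with an $L^2$ bound on $\tilde u$. The uniform exponential decay of the perturbed evolution family guaranteed by robust exponential stability, combined with a Duhamel representation of a $T$-periodic solution integrated from $-\infty$ (after lifting the inhomogeneous boundary data), yields $\|\tilde u(\cdot,t)\|_{L^2}\le \mbox{const}\,(\|f\|_{BC}+\|h\|_{BC})$ with constant depending only on the decay parameters $M,\alpha$ and the coefficient norms, uniformly in $(\tilde a,\tilde b)$. This $L^2$ bound is then promoted to $BC$ and then to $BC^1$ (part (i)) or $BC^2$ (part (ii)) through the operator representations (\ref{oper}) and the iterate (\ref{oper2}) with the perturbed operators $\tilde C, \tilde D, \tilde P, \tilde S$: the inverse bounds (\ref{I-C--1}) and (\ref{BC2t}), the norms of $\tilde C, \tilde D, \tilde P, \tilde S$, and the smoothing constants in (\ref{ots3}) and (\ref{ots4}) all vary continuously with $(\tilde a,\tilde b)\in BC^2$ and therefore stay bounded on the perturbation ball, producing $L_3$ and $L_4$.

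The main obstacle, I expect, is the uniform bound on $\|[I-\tilde C-\tilde D]^{-1}\|_{{\cal L}(C_{per}(\Pi;\bbbr^n))}$, which controls the analogue of the constant $K$ in (\ref{BC}). Injectivity in Claim \ref{3} relied on the exponential stability of the unperturbed problem; for a uniform inverse bound one uses robust exponential stability to rerun that argument with decay constants independent of the perturbation. If a direct Duhamel-based bound proves awkward, the fallback is a compactness-and-contradiction argument: a hypothetical blow-up $\|[I-\tilde C^n-\tilde D^n]^{-1}\|\to\infty$ along a sequence $(\tilde a^n,\tilde b^n)\to (a,b)$ produces normalized $v^n\in C_{per}(\Pi;\bbbr^n)$ with $(I-\tilde C^n-\tilde D^n)v^n\to 0$; the uniform smoothing of $\tilde D^2+\tilde D\tilde C$ combined with Arzel\`a--Ascoli extracts a limit $v\ne 0$ in the kernel of $I-C-D$, contradicting Claim \ref{3}.
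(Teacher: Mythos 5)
Your proposal is correct and, in its ``fallback'' form, is exactly the paper's argument: the paper establishes the uniform bound on $\|[I-\widetilde C-\widetilde D]^{-1}\|_{{\cal L}(BC(\Pi;\bbbr^n))}$ precisely by the compactness-and-contradiction route, extracting via Arzel\`a--Ascoli a normalized limit $v^*\ne 0$ annihilated by the limit operator $I-C^*-D^*$ and contradicting the injectivity of Claim \ref{3}, and then promotes the resulting $BC$ bound to $BC^1$ and $BC^2$ through the iterated operator equation and the uniform smoothing estimates, just as you outline. One small correction to your fallback: the bad sequence $(\tilde a^l,\tilde b^l)$ need only lie in the $\gamma_1$-ball and need not converge to $(a,b)$, so the limiting kernel element lies in $\ker(I-C^*-D^*)$ for some $(a^*,b^*)$ in the ball rather than in $\ker(I-C-D)$ --- which is exactly why the injectivity argument must be run for arbitrary coefficients in the ball, i.e.\ why \emph{robust} exponential stability (and not merely stability at $(a,b)$) is the hypothesis that closes the contradiction.
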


\begin{proof}
	Let the conditions of Part 	$(\bf i)$ of the theorem be fulfilled.  Denote by  $\widetilde \omega_j(\xi,x,t)$ the continuous solution
	to the problem (\ref{char}) with $a_j$ replaced by $\tilde a_j$.
	We will write $\widetilde G_i$, $\widetilde C$,  $\widetilde D$, $\widetilde P$, and $\widetilde S$ for the operators $G_i$, $C$, $D$, $P$, and $S$ defined by (\ref{Ci}) and (\ref{CDF})
	where $a$, $b$, and $\omega$ are replaced by $\tilde a$ and $\tilde b$, and $\widetilde\omega$, respectively.
	Fix 
	$\gamma_1\le\gamma$ so small  that the inequalities
$\|\widetilde G_0\|_{{\cal L}(BC(\bbbr; \bbbr^n))}<c_0$ and $\|\widetilde G_1\|_{{\cal L}(BC(\bbbr; \bbbr^n))}<c_0$ are fulfilled
for some $c_0<1$ and all $\tilde a, \tilde b \in C_{per}^2(\Pi,\bbbm_n)$ 
varying in the range $\|\tilde a - a\|_{BC^2(\Pi;\bbbm_n)} \le \gamma_1$ and $ \|\tilde b - b\|_{BC^2(\Pi;\bbbm_n)} \le \gamma_1$.
Let 	
$ f\in C_{per}(\Pi;\bbbr^n)\cap BC^1_t(\Pi;\bbbr^n)$,  $h\in C^1_{per}(\bbbr;\bbbr^n)$, and 	$\tilde a, \tilde b \in C_{per}^2(\Pi;\bbbm_n)$  in the range $\|\tilde a - a\|_{BC^2(\Pi;\bbbm_n)} \le \gamma_1$ and $ \|\tilde b - b\|_{BC^2(\Pi;\bbbm_n)} \le \gamma_1$ be arbitrary fixed.
Then, by Part $({\bf i})$ of Theorem \ref{lin-smooth},
the problem (\ref{1p}), (\ref{2p}) has a
unique classical $T$-periodic solution, that will be denoted by $\tilde u$. Moreover, $\tilde u$ satisfies the a priori estimate (\ref{est21p}) with a constant
$L_3$ not depending on  $f$ and~$h$ (but possibly depending on
$\tilde a$ and $\tilde b$). To show that $L_3$ can be chosen to be the same for all
$\tilde a$ and $\tilde b$, we first prove for $\tilde u$ the uniform upper bound as in (\ref{BC}).
In other words, we have to prove  that the following inequality is true for all $g\in C_{per}(\Pi,\bbbr^n)$:
	\begin{equation}\label{ots1p}
\left\|[I- \widetilde C- \widetilde D]^{-1}g\right\|_{BC(\Pi;\bbbr^n)} \le K_3\| g \|_{BC(\Pi;\bbbr^n)},
\end{equation}
where the constant $K_3$ can be chosen to be the same for all  
 $\tilde a$ and $\tilde b$ in the $\gamma_1$-ball of $a$ and $b$. 

To prove (\ref{ots1p}),  
assume, on the contrary,
	that there exist sequences  $(\tilde a^l)$ and $(\tilde b^l)$ with $C_{per}^2(\Pi;\bbbm_n)$-elements, 
	 $(g^l)$ with $C_{per}^2(\Pi;\bbbr^n)$-elements, and the corresponding them sequences $(\widetilde\omega^l(\xi))$
	 of functions defining characteristic curves of the system 
	 (\ref{1p}), such that
	\begin{equation}\label{ots2p}
	\|v^l\|_{BC(\Pi;\bbbr^n)} \ge l\| g^l \|_{BC(\Pi;\bbbr^n)}.
	\end{equation}
	Here $v^l=(I- \widetilde C^l- \widetilde D^l)^{-1}g^l$,
 $\|v^l\|_{BC^2(\Pi;\bbbr^n)} = 1$ for all $l\in\bbbn$, and $(\widetilde C^l)$ and $(\widetilde D^l)$   are the corresponding sequences of perturbed operators.
 It follows  that
	\begin{equation}\label{ots3p}
	\|[I- \widetilde C^l- \widetilde D^l]v^l\|_{BC(\Pi;\bbbr^n)} \le \frac{1}{l}\| v^l \|_{BC(\Pi;\bbbr^n)}.
	\end{equation}
	Since  the sequences  $(\tilde a^l)$ and $(\tilde b^l)$  are uniformly bounded in $C^2_{per}(\Pi;\bbbm_n)$,
	 by the Arzel\`a–Ascoli theorem, there exist
	 subsequences 
	of $(\tilde a^l)$ and  $(\tilde b^l)$
	(here and below we will keep the same notation for the subsequences) converging in $C_{per}^1(\Pi;\bbbm_n)$ to functions, say
	$a^*$ and $b^*$. Hence, the corresponding subsequence of ($\widetilde\omega^l(\xi)$) converges in $BC^1([0,1]\times\Pi;\bbbr^n)$ to, say
	$\omega^*(\xi)$. 
	Note that for all $l\ge 1$ it holds
	$$
\widetilde	\omega^l(\xi;x,t+T)=\widetilde	\omega^l(\xi;x,t)+T
	$$
	and, hence
	$$
	\omega^*(\xi;x,t+T)=	\omega^*(\xi;x,t)+T.
	$$
	This, in its turn, implies that 
	the corresponding subsequence of $(v^l)$ converges in $C^1_{per}(\Pi;\bbbr^n)$ to a function 
	$v^*\in C_{per}^1(\Pi;\bbbr^n)$. 	
	Note that $v^* \not\equiv 0$,
	since $\|v^l\|_{BC^2(\Pi;\bbbr^n)} = 1.$ 
	By $C^*$ and $D^*\in {\cal L}(C_{per}(\Pi;\bbbr^n))$ we denote the
		limit operators of $(\widetilde C^l)$ and $(\widetilde D^l)$, respectively. 
		Passing to  the limit as $l\to\infty$ results in the equation
	$$[I - C^* - D^*]v^* = 0.$$ 
	This means that  $v^*$ is a nontrivial continuous time-periodic solution to the homogeneous 
	problem (\ref{1p}), (\ref{2p}) (where $f\equiv 0$ and $h\equiv 0$) with $a^*$ and $b^*$ in place of $\tilde a$ and $\tilde b$, respectively. This, in its turn, contradicts to Claim \ref{3} in the proof Theorem \ref{lin-smooth} stating that the operator $I-C^*-D^*: C_{per}(\Pi;\bbbr^n)\to C_{per}(\Pi;\bbbr^n)$ 
	is injective.
		We, therefore, conclude that the a priori estimate (\ref{ots1p}) 
	is fulfilled with a constant
	$K_3$ not depending on  $f$, $h$, $\tilde a$, $\tilde b$. This means, in particular, that
	\begin{equation}\label{ots2p}
		\begin{array}{rcl} 
	\| u^*\|_{BC(\Pi;\bbbr^n)} & = & \left\| [I - \widetilde C - \widetilde  D]^{-1}( \widetilde Ph+\widetilde Sf)\right\|_{BC(\Pi;\bbbr^n)} \\ [3mm]
	& \le & K_{4}\left(\| f \|_{BC(\Pi,\bbbr^n)} + \| h \|_{BC(\bbbr,\bbbr^n)} \right).
	\end{array} 
\end{equation}
for a constant $K_4$ not depending on $f$, $h$, $\tilde a$, $\tilde b$. 
	
	Next we prove the uniform a priori estimate (\ref{est21p}).
The operators 
 $\widetilde C$ and $\widetilde D$ are bounded in the norm of  ${\cal L}(BC^1_t)$ uniformly in $\tilde a$ and $\tilde b$
 varying in the range 
 $\|\tilde a - a\|_{BC^2(\Pi;\bbbm_n)} \le \gamma_1$ and $ \|\tilde b - b\|_{BC^2(\Pi;\bbbm_n)} \le \gamma_1$. 
 On the account of  (\ref{I-C--1}) and (\ref{ots3}), the following estimates:
\begin{equation}\label{I-C--2}
\|(I-\widetilde C)^{-1}\|_{{\cal L}(BC^1_t(\Pi;\bbbr^n))}\le 1+\frac{1}{\sigma(1 - \nu)}\|\widetilde C\|_{{\cal L}(BC^1_t(\Pi;\bbbr^n))}
\end{equation}
and
\begin{eqnarray} \label{ots31}
\left\|\partial_t\left[(\widetilde D\widetilde C + \widetilde D^2)u\right]\right\|_{BC(\Pi;\bbbr^n)} \le K_{5} \|u\|_{BC(\Pi;\bbbr^n)}
\end{eqnarray}
are fulfilled for some $\sigma<1$, $\nu<1$, and $K_{5}>0$ uniformly in all 
$\tilde a$, $\tilde b$, and $u$ under consideration. 
In particular, (\ref{ots31}) is true for $u=u^*$.
Further, following the  proof of Claim \ref{4} in 
Section \ref{periodic} with 
$\tilde a$ and $\tilde b$ in place of $a$ and $b$
and taking into account the  estimates (\ref{I-C--2}) and (\ref{ots31}), we derive  the desired estimate~(\ref{est21p}). Part 	$(\bf i)$ of the theorem is therewith proved.

To prove Part  $(\bf ii)$, 
we  fix 
$\gamma_1\le\gamma$ so small  that there exists  $c_0<1$ such that
$\|\widetilde G_i\|_{{\cal L}(BC(\bbbr; \bbbr^n))}<c_0$   for 
$i=0,1,2$ and  all $\tilde a, \tilde b \in C_{per}^2(\Pi,\bbbm_n)$ 
varying in the range $\|\tilde a - a\|_{BC^2(\Pi;\bbbm_n)} \le \gamma_1$ and $ \|\tilde b - b\|_{BC^2(\Pi;\bbbm_n)} \le \gamma_1$. 
Then, by Claim~$(\bf ii)$ of Theorem \ref{lin-smooth}, the classical solution $\tilde u$	
to the problem 	(\ref{1p}), (\ref{2p}) belongs to $C_{per}^2(\Pi,\bbbr^n)$.
To prove the  estimate (\ref{est22p}), we take
additionally into account  that the estimate (\ref{ots4}) with $C$ and $D$ replaced by $\tilde C$ and $\tilde D$
is uniform in $\tilde a$ and $\tilde b$ varying in the $\gamma_1$-balls of $a$ and $b$, respectively,
with appopriately chosen $K_{2}$.
 Then,
repeating the proof of  Claim~\ref{7} in Section \ref{periodic} 
with 
$\tilde a$ and $\tilde b$ in place of $a$ and $b$, respectively, results in the desired estimate (\ref{est22p}).

The proof of Theorem \ref{prop3} is complete.
	\end{proof}
   
  \section{Quasilinear problems: proof of Theorem \ref{main}.}\label{quasilinear}
\setcounter{claim}{0}

We first  rewrite the system (\ref{1}) in a suitable form, namely
$$\partial_t u + A(x,t,u)\partial_x u + B(x,t, u)u = f(x,t),$$
where $f(x,t) = F(x,t, 0)$ and $B(x,t, u)u =  F(x,t, 0)- F(x,t, u)$. By our assumptions, the functions $f$ and $B$ are $T$-periodic in 
$t$.	

Let $a_0$ be a constant fulfilling the condition (\ref{hyp}).  Let  $\gamma_1$ and $L_4$ be constants
as in Theorem \ref{prop3} $(\bf ii)$.

Put $u^0(x,t) =0.$  Determine a sequence of iterations  $(u^{k}(x,t))_{k=1}^\infty$ 
such that any subsequent iteration $u^{k+1}$
is the unique
classical $T$-periodic solution   to the linear system
\begin{eqnarray} \label{qua1}
	\partial_t u^{k+1}+ A^k\partial_x u^{k+1}  + B^ku^{k+1} = f(x,t),\ \  k=0,1,2,\dots
\end{eqnarray}
subjected to the boundary conditions 
\begin{equation}\label{qua11}
	\begin{array}{ll}
		u^{k+1}_{j}(0,t)= R_ju^{k+1}(\cdot,t)  + H_j^k(t), \quad 1\le j\le m,\\ [1mm]
		u^{k+1}_{j}(1,t)= R_ju^{k+1}(\cdot,t)   + H_j^k(t), \quad m< j\le n,
	\end{array}
\end{equation}
where $H^k_j(t)=H_j(t, Q_j(t)u^k(\cdot,t))$. 
Here and below we will simply write $A^k$ and $B^k$ 
for $A(x,t,u^k(x,t))$ and  $B(x,t,u^k(x,t))$,  respectively.

Let us separate the linear part of   the boundary operator $H_j(t,Q_j(t)u)$ near $u=0$, by writing $H_j$ as follows:
\begin{equation}\label{approx}
	\begin{array}{rcl}
H_j(t,Q_j(t)u(\cdot,t))&=& h_j(t) + \partial_2H_j(t,0)Q_j(t) u(\cdot,t)\\ [2mm]
&&\hskip-8mm\displaystyle+ 	\left[Q_j(t)	u(\cdot,t)\right]^2 \int_0^1
\d_2^2H_j(t,\sigma Q_j(t)u(\cdot,t))
\dd\sigma,\quad j\le n,
	\end{array}
\end{equation}
where $h(t)=H(t,0)$.
Write
$$
\|Q\|=\sum_{k=0}^2\sup\left\{\left\|\frac{d^k}{dt^k}Q(t) \right\|_{{\cal L}\left(BC([0,1];\bbbr^n),\bbbr^n\right)}\, :\, t\in\bbbr\right\} 
$$
and set
\begin{equation}\label{HQ2}
	\begin{array}{ll}
\varrho_1=\displaystyle\max\left\{	\left\| \partial_2H_j(\cdot,0) \right\|_{BC^2(\bbbr;\bbbr^n)}\,:\,j\le n\right\}, \\ [3mm]
	\displaystyle	
\varrho_2=	\sup\left\{\left\|\partial_2^2H_j(\cdot,v(\cdot))\right\|_{BC^2(\bbbr;\bbbr^n)}\,: \,   
		 \|v\|_{BC^2(\bbbr)}\le \delta_0\|Q \|, \,  j\le n \right\},
	\end{array}
\end{equation}
where the constant $\delta_0$ is as in Assumption ${\bf (A1)}$.
We will suppose that the number $\varrho_1$ is small enough to fulfill  the inequality $\varrho_1\|Q\|< 1/L_4$ for some $L_4$  satisfying the a priori estimate (\ref{est22p}).
Set $a^\psi (x,t) = A(x,t,\psi (x,t)) - A(x,t,0)$ and $b^\psi(x,t) = B(x,t,\psi (x,t)) - B(x,t,0). $
Since the functions $A$ and $B$ are $C^2$-smooth, then
there exists a positive real 
\begin{equation}\label{delta1}
\delta_1 \le \min\left\{\delta_0, \frac{1}{ \varrho_2\|Q\|}\left( \frac{1}{L_4\|Q\|} - \varrho_1\right)\right\}
\end{equation}
such that, for all $\psi\in C^2_{per}( \Pi; \bbbr^n)$
with $\|\psi\|_{BC^2(\Pi;\bbbr^n)} \le \delta_1$,
we have
\begin{equation}\label{eps1}\| a^\psi \|_{BC^2( \Pi; \bbbm^n)}    \le\gamma_1, \quad 
\| b^\psi \|_{BC^2( \Pi; \bbbm^n)}   \le \gamma_1.
\end{equation}
Hence, by Theorem \ref{prop3} ${\bf (ii)}$, for given $\psi \in C^2_{per}(\Pi;\bbbr^n)$,  the system
$$\partial_t u + A(x,t,\psi)\partial_x u + B(x,t, \psi)u = f(x,t)$$
with the boundary conditions (\ref{2p})  has a unique classical solution 
$u^\psi$, which belongs to $C^2_{per}(\Pi,\bbbr^n)$  and
satisfies the a priori estimate (\ref{est22p}).

We divide the proof into a number of claims and proceed similarly to the proof of  \cite[Theorem 1]{jee}. 

\begin{claim}\label{8}
	If $\varrho_1\|Q\|< 1/L_4$ for some $L_4$  satisfying the a priori estimate (\ref{est22p}) and
	\begin{equation} \label{qua4}
		\begin{array}{c}
				\| f \|_{BC^1(\Pi;\bbbr^n)} + \|\partial_t^2 f \|_{BC(\Pi;\bbbr^n)}
		+ \| h\|_{BC^2(\bbbr;\bbbr^n)} \\ [2mm]
		\displaystyle\le \frac{\delta_1}{L_4}\left[1-\left(\varrho_1 +  \delta_1\varrho_2\|Q\|\right)\|Q\|L_4\right],
	\end{array}
	\end{equation}
	then 	the problem (\ref{qua1})--(\ref{qua11})  generates  a sequence $(u^k)$ of  classical time-periodic solutions $u^k\in C^2_{per}(\Pi;\bbbr^n)$
 such that
	\begin{equation}\label{qua5}
		\| u^k\|_{BC^2( \Pi; \bbbr^n)}\le \delta_1 \quad \mbox{for all } k\ge 1.
	\end{equation}
\end{claim}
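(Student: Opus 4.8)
The plan is to argue by induction on $k$, the induction hypothesis being that $u^k$ is a well-defined element of $C^2_{per}(\Pi;\bbbr^n)$ with $\|u^k\|_{BC^2(\Pi;\bbbr^n)}\le\delta_1$; the base case $k=0$ is trivial since $u^0\equiv 0$. Granting the hypothesis for $k$, the bound $\|u^k\|_{BC^2(\Pi;\bbbr^n)}\le\delta_1$ together with the choice of $\delta_1$ and the implication (\ref{eps1}) puts the coefficients $a^{u^k}=A^k-a$ and $b^{u^k}=B^k-b$ of (\ref{qua1}) into the $\gamma_1$-balls around $0$ in $BC^2(\Pi;\bbbm_n)$; moreover $f=F(\cdot,\cdot,0)$ has the regularity demanded by Theorem~\ref{prop3}~$({\bf ii})$ thanks to ${\bf (A1)}$, and the boundary datum $H^k$, $H^k_j(t)=H_j(t,Q_j(t)u^k(\cdot,t))$, belongs to $C^2_{per}(\bbbr;\bbbr^n)$ by ${\bf (A2)}$, ${\bf (A3)}$ and the inductive regularity of $u^k$. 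Thus (as already recorded just before the claim) Theorem~\ref{prop3}~$({\bf ii})$ provides a unique classical $T$-periodic solution $u^{k+1}\in C^2_{per}(\Pi;\bbbr^n)$ of (\ref{qua1})--(\ref{qua11}), and it obeys the a priori bound (\ref{est22p}),
$$
\|u^{k+1}\|_{BC^2(\Pi;\bbbr^n)}\le L_4\Bigl(\|f\|_{BC^1(\Pi;\bbbr^n)}+\|\partial_t^2 f\|_{BC(\Pi;\bbbr^n)}+\|H^k\|_{BC^2(\bbbr;\bbbr^n)}\Bigr).
$$

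The heart of the matter is to bound $\|H^k\|_{BC^2(\bbbr;\bbbr^n)}$, and I would do this with the Taylor-type splitting (\ref{approx}) of $H_j$ at $v=0$. The constant term contributes $\|h\|_{BC^2(\bbbr;\bbbr^n)}$ with $h=H(\cdot,0)$. For the linear term $\partial_2H_j(t,0)\,Q_j(t)u^k(\cdot,t)$, I would differentiate up to twice in $t$ and expand the $t$-derivatives of $Q_j(t)u^k(\cdot,t)$ through the formulas (\ref{R'}) of ${\bf (A2)}$; each summand then being a product of a $t$-derivative of order $\le2$ of $\partial_2H_j(\cdot,0)$ with one of $Q_j,Q_j',\widetilde Q_j,Q_{0j},Q_{1j},Q_{2j}$ applied to $u^k,\partial_tu^k$ or $\partial_t^2u^k$, this contribution is controlled, by the definitions of $\varrho_1$ and $\|Q\|$, by $\varrho_1\|Q\|\,\|u^k\|_{BC^2(\Pi;\bbbr^n)}$. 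For the quadratic remainder, the inductive bound $\|u^k\|_{BC^2(\Pi;\bbbr^n)}\le\delta_1\le\delta_0$ and (\ref{R'}) yield $\|\sigma Q_j(\cdot)u^k(\cdot,\cdot)\|_{BC^2(\bbbr)}\le\delta_0\|Q\|$, so by the definition of $\varrho_2$ the integrand factor $\partial_2^2H_j(\cdot,\sigma Q_j(\cdot)u^k(\cdot,\cdot))$ has $BC^2$-norm $\le\varrho_2$; multiplying by $[Q_j(\cdot)u^k(\cdot,\cdot)]^2$ (of $BC^2$-norm $\le\|Q\|^2\|u^k\|_{BC^2(\Pi;\bbbr^n)}^2$) and using $\|u^k\|_{BC^2(\Pi;\bbbr^n)}\le\delta_1$ once more bounds the remainder by $\delta_1\varrho_2\|Q\|^2\|u^k\|_{BC^2(\Pi;\bbbr^n)}$. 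Summing the three pieces gives
$$
\|H^k\|_{BC^2(\bbbr;\bbbr^n)}\le\|h\|_{BC^2(\bbbr;\bbbr^n)}+\bigl(\varrho_1+\delta_1\varrho_2\|Q\|\bigr)\|Q\|\,\|u^k\|_{BC^2(\Pi;\bbbr^n)}.
$$

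Finally I would insert this into (\ref{est22p}), use the inductive bound $\|u^k\|_{BC^2(\Pi;\bbbr^n)}\le\delta_1$, and then the smallness hypothesis (\ref{qua4}) on $\|f\|_{BC^1(\Pi;\bbbr^n)}+\|\partial_t^2 f\|_{BC(\Pi;\bbbr^n)}+\|h\|_{BC^2(\bbbr;\bbbr^n)}$, obtaining
$$
\|u^{k+1}\|_{BC^2(\Pi;\bbbr^n)}\le\delta_1\bigl[1-(\varrho_1+\delta_1\varrho_2\|Q\|)\|Q\|L_4\bigr]+L_4(\varrho_1+\delta_1\varrho_2\|Q\|)\|Q\|\,\delta_1=\delta_1,
$$
which closes the induction. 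Here $\varrho_1\|Q\|<1/L_4$ together with (\ref{delta1}) guarantees $(\varrho_1+\delta_1\varrho_2\|Q\|)\|Q\|L_4\le1$, so the right-hand side of (\ref{qua4}) is nonnegative and the hypothesis of the claim is meaningful.

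I expect the one genuinely delicate step to be the $\|H^k\|_{BC^2}$ estimate: tracking how the two $t$-differentiations distribute, via (\ref{R'}), across the product and across the integral remainder in (\ref{approx}), and checking that the combinatorial constants are absorbed so that the coefficient of $\|u^k\|_{BC^2(\Pi;\bbbr^n)}$ comes out exactly as $(\varrho_1+\delta_1\varrho_2\|Q\|)\|Q\|$, matching (\ref{qua4}). Everything else is a mechanical invocation of Theorem~\ref{prop3}~$({\bf ii})$ together with the bookkeeping above.
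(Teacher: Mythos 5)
Your proof is correct and follows essentially the same route as the paper: induction on $k$, invoking Theorem~\ref{prop3}~$({\bf ii})$ at each step via (\ref{eps1}), and closing the induction with the bound $\|H^k\|_{BC^2(\bbbr;\bbbr^n)}\le\|h\|_{BC^2(\bbbr;\bbbr^n)}+(\varrho_1+\delta_1\varrho_2\|Q\|)\|Q\|\,\|u^k\|_{BC^2(\Pi;\bbbr^n)}$ combined with (\ref{qua4}). The only presentational difference is that you spell out the derivation of this $H^k$ estimate from (\ref{approx}), (\ref{R'}) and (\ref{HQ2}), which the paper leaves implicit.
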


\begin{subproof} 	
	\hide{
First write the expression for $H^k$ in a form that is desirable for our further purposes.
	The formula   (\ref{approx1}) reads
	\begin{equation}\label{approx2}
		\begin{array}{rcl}
			\displaystyle			\d_2p(t,v(t))=	\d_2H(t,v(t))-\d_2H(t,0)=
			v(t)\int_0^1\partial_2^2H(t,\sigma_1v(t))\dd\sigma _1.
		\end{array}
	\end{equation}
	As $p(t,0)=0$, we derive from (\ref{approx}), (\ref{approx1}), and (\ref{approx2})
	that
	\begin{equation}\label{approx3}
		\begin{array}{rcl}
			H^k(t)&=&h(t) + \partial_2H(t,0)Q(t) u^k(\cdot,t)+ p(t,Q (t)u^k(\cdot,t))-p(t,0)\\ [2mm]
			&=&\displaystyle h(t)  + \partial_2H(t,0)Q(t) u^k(\cdot,t)+
			Q(t)	u^k(\cdot,t)\int_0^1\partial_2 p\left(t,\sigma Q(t)u^k(\cdot,t)\right)\dd\sigma \\ [3mm]
			&=&\displaystyle h(t)  + \partial_2H(t,0)Q(t) u^k(\cdot,t)\\ [2mm]
			&&+\displaystyle
			Q(t)	u^k(\cdot,t) Q(t)u^k(\cdot,t) \int_0^1\sigma \int_0^1\d_2^2H(t,\sigma_1\sigma Q(t)u^k(\cdot,t))
			\dd\sigma\dd\sigma_1.
		\end{array}
	\end{equation}
}
	Note that the first iteration $u^1$  satisfies the system (\ref{1lin0})--(\ref{2p}), where $a(x,t)=A(x,t,0)$, $b(x,t)= \partial_3 F(x,t,0)$,
	and $h(t)=H(t,0)$.
	Due to Theorem \ref{lin-smooth} {\bf (ii)}, the last system has a unique time-periodic
	classical solution   $u^1 \in C^2_{per}(\Pi;\bbbr^n)$.
	Moreover,  this solution satisfies the a priori estimate~(\ref{L6}) and, obviously, the weaker  estimate (\ref{est22p}). Hence,
	if  $ f$  and  $ h$ satisfy the estimate~(\ref{qua4}), then, on the account of (\ref{delta1}), 
	$u^1$ satisfies the estimate~(\ref{qua5}) with $k=1$. Furthermore, due to (\ref{eps1}), it holds that
	$$
	\|A^1 - A^0\|_{BC^2( \Pi; \bbbm^n)} \le \gamma_1, \quad
	\|B^1 - B^0\|_{BC^2( \Pi; \bbbm^n)} \le \gamma_1.
	$$
	Now we use Theorem \ref{prop3} {\bf (ii)} to state that there exists a unique  
	classical  time-periodic solution $u^2\in C^2_{per}(\Pi,\bbbr^n)$ to the system (\ref{qua1})--(\ref{qua11}) with $k=1$.
	Moreover,  due to  (\ref{est22p}), (\ref{HQ2}),  (\ref{qua4}), and  (\ref{qua5}),  the following desired a priori estimate is fulfilled:
	\begin{eqnarray*} 
		\hspace{-1mm}\|u^2\|_{BC^2(\Pi;\bbbr^n)} &\le& L_4\Bigl(\| f \|_{BC^1(\Pi;\bbbr^n)} + \|\partial_t^2 f \|_{BC(\Pi;\bbbr^n)} 
		+ \|H^1\|_{BC^2(\bbbr;\bbbr^n)}\Bigl) \\
		&\le& \delta_1\left[1-\left(\varrho_1 +  \delta_1\varrho_2\|Q\|\right)\|Q\|L_4\right]+
	\delta_1	\left(\varrho_1 +  \delta_1\varrho_2\|Q\|\right)\|Q\|L_4= \delta_1.	
	\end{eqnarray*}
	We again use  (\ref{eps1}) to immediately conclude  that
	$$
	\|A^2 - A^0\|_{BC^2( \Pi; \bbbm^n)} \le \gamma_1, \quad
	\|B^2 - B^0\|_{BC^2( \Pi; \bbbm^n)} \le \gamma_1.
	$$
	
	Proceeding by induction, assume that the problem (\ref{qua1})--(\ref{qua11})
	has  a unique  classical time-periodic solution $u^k$  belonging to
	$C^2_{per}(\Pi,\bbbr^n)$ and satisfying the bound (\ref{qua5}). 
	This implies, in particular, that
	$$
	\|A^k - A^0\|_{BC^2( \Pi; \bbbm^n)} \le \gamma_1, \quad
	\|B^k - B^0\|_{BC^2( \Pi; \bbbm^n)} \le \gamma_1.
	$$ 
	By  Theorem \ref{prop3} {\bf (ii)},  the problem (\ref{qua1})--(\ref{qua11}) has  a unique   
	classical time-periodic
	solution $u^{k+1}\in C^2_{per}(\Pi,\bbbr^n)$. That this solution  fulfills the bound (\ref{qua5}),
	with $k+1$ in place of $k$, again follows  from (\ref{est22p}), (\ref{HQ2}),  (\ref{qua4}), and  (\ref{qua5}).
	\end{subproof}

\begin{claim}\label{9}
	There exists 
	$$\varepsilon_1\le \frac{\delta_1}{L_4}\left[1-\left(\varrho_1 +  \delta_1\varrho_2\|Q\|\right)\|Q\|L_4\right]
	$$ 
	such that, if
	$ \| f \|_{BC^1(\Pi;\bbbr^n)} + \|\partial_t^2 f \|_{BC(\Pi;\bbbr^n)} + \|h\|_{BC^2(\bbbr,\bbbr^n)}  < \varepsilon_1,
	$
	then  	the sequence  $(u^k)$ 
	converges in   $C_{per}^1(\Pi;\bbbr^n)$ to a classical $T$-periodic solution to the problem (\ref{1}), (\ref{2}).
\end{claim}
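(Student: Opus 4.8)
The plan is to prove that the sequence $(u^k)$ built in Claim~\ref{8} is a Cauchy sequence in the Banach space $C^1_{per}(\Pi;\bbbr^n)$ and that its limit is a classical $T$-periodic solution of \reff{1}, \reff{2}. For $k\ge 1$ put $w^{k+1}=u^{k+1}-u^k$. Subtracting the systems \reff{qua1}--\reff{qua11} written for the indices $k$ and $k-1$ and rearranging with the help of $A^k\partial_xu^{k+1}-A^{k-1}\partial_xu^k=A^k\partial_xw^{k+1}+(A^k-A^{k-1})\partial_xu^k$ together with the analogous identity for the zero-order terms, one finds that $w^{k+1}$ solves the linear nonhomogeneous problem \reff{1p}, \reff{2p} with $\tilde a=A^k$, $\tilde b=B^k$, with right-hand side
$$
f^k=-(A^k-A^{k-1})\partial_xu^k-(B^k-B^{k-1})u^k,
$$
and with boundary data $h^k=H^k-H^{k-1}$, where $H^k_j(t)=H_j(t,Q_j(t)u^k(\cdot,t))$.

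Next I would apply Part ${\bf (i)}$ of Theorem~\ref{prop3} to this difference problem. Its hypotheses are in force: by \reff{qua5} and the $C^2$-smoothness of $A,B$ the coefficients $A^k,B^k$ belong to $C^2_{per}(\Pi;\bbbm_n)$ and lie in the $\gamma_1$-ball of $a,b$ (cf.\ \reff{eps1}; shrinking $\delta_1$ if necessary so that the $\gamma_1$ of Part ${\bf (i)}$ is obeyed as well), while $f^k\in C_{per}(\Pi;\bbbr^n)\cap BC^1_t(\Pi;\bbbr^n)$ and $h^k\in C^1_{per}(\bbbr;\bbbr^n)$ by (A2), (A3) and \reff{approx}. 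Hence, by uniqueness, $w^{k+1}$ satisfies the estimate \reff{est21p} with $L_3$ independent of $k$:
$$
\|w^{k+1}\|_{BC^1(\Pi;\bbbr^n)}\le L_3\bigl(\|f^k\|_{BC^1_t(\Pi;\bbbr^n)}+\|h^k\|_{BC^1(\bbbr;\bbbr^n)}\bigr).
$$
It then remains to bound the right-hand side by $\|w^k\|_{BC^1}$. The mean value theorem applied to $A$ and $B$ (all iterates staying in the $\delta_0$-ball) yields $\|A^k-A^{k-1}\|_{BC^1_t}+\|B^k-B^{k-1}\|_{BC^1_t}\le c_1\|w^k\|_{BC^1}$ with $c_1$ depending only on the $C^2$-bounds of $A,B$; since by \reff{qua5} the factors $\partial_xu^k$ and $u^k$ have $BC^1_t$-norm of order $\delta_1$, this gives $\|f^k\|_{BC^1_t}\le c_1\delta_1\|w^k\|_{BC^1}$ (with $c_1$ possibly enlarged). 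A Taylor expansion of $H_j$ as in \reff{approx}, the representation \reff{R'} of the $t$-derivatives of $Q_j(t)$, and the definitions \reff{HQ2} give $\|h^k\|_{BC^1}\le(\varrho_1+\delta_1\varrho_2\|Q\|)\|Q\|\,\|w^k\|_{BC^1}$. Combining these, $\|w^{k+1}\|_{BC^1}\le q\|w^k\|_{BC^1}$ with $q=L_3\bigl(c_1\delta_1+(\varrho_1+\delta_1\varrho_2\|Q\|)\|Q\|\bigr)$; since $\varrho_1\|Q\|<1/L_4$ and $L_3\le L_4$, it suffices to choose $\delta_1$ in \reff{delta1}, and the corresponding $\varepsilon_1$, so small that $q<1$.

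With $q<1$ in hand, $(u^k)$ is Cauchy, hence convergent, in the complete space $C^1_{per}(\Pi;\bbbr^n)$; denote the limit by $u^*$, so that $\|u^*\|_{BC^1}\le\delta_1$ by passing to the limit in \reff{qua5}. Finally I would pass to the limit in \reff{qua1}--\reff{qua11}: since $u^k\to u^*$ in $C^1$, the derivatives $\partial_tu^{k+1},\partial_xu^{k+1}$ converge uniformly to $\partial_tu^*,\partial_xu^*$, while $A^k=A(\cdot,\cdot,u^k)\to A(\cdot,\cdot,u^*)$ and $B^k\to B(\cdot,\cdot,u^*)$ uniformly by continuity; hence \reff{qua1} passes to the limit and, recalling that $f=F(\cdot,\cdot,0)$ and $B(x,t,u)u=F(x,t,0)-F(x,t,u)$, it yields exactly \reff{1} for $u^*$. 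Likewise the continuity of $R_j$, of the operators $Q_j(t)$, and of $H_j$ lets one pass to the limit in the boundary relations \reff{qua11} to recover \reff{2}. Thus $u^*\in C^1_{per}(\Pi;\bbbr^n)$ is a classical $T$-periodic solution to \reff{1}, \reff{2}.

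The main obstacle is the book-keeping in the contraction step: one has to check that every summand of $f^k$ carries a factor $\delta_1$ (it does, because each coefficient difference is multiplied by $\partial_xu^k$ or by $u^k$, both of size of order $\delta_1$ by \reff{qua5}) and that the leading contribution to $h^k$ carries precisely the factor $\varrho_1\|Q\|$, so that the smallness hypotheses force $q<1$. A related structural point is that convergence is obtained here only in $C^1_{per}$: $f^k$ contains $\partial_xu^k$ as a factor, whence $\partial_t^2f^k$ would involve $\partial_x\partial_t^2u^k$, a third-order derivative of $u^k$ that is not available; consequently Part ${\bf (ii)}$ of Theorem~\ref{prop3} cannot be invoked for the difference problem, and the $C^2_{per}$-regularity of $u^*$ required in Theorem~\ref{main} has to be recovered by a separate argument.
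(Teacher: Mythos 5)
Your proposal follows the same strategy as the paper's proof: form the differences $z^{k+1}=u^{k+1}-u^k$, observe that they solve the perturbed linear problem (\ref{1p}), (\ref{2p}) with coefficients $A^k,B^k$, right-hand side $f^k=-(A^k-A^{k-1})\partial_xu^k-(B^k-B^{k-1})u^k$ and boundary data $H^k-H^{k-1}$, invoke the uniform a priori estimate (\ref{est21p}) of Theorem~\ref{prop3}~$({\bf i})$, close a contraction in $BC^1$, and pass to the limit. The one genuine difference is where the smallness of the contraction factor comes from. You bound $\|f^k\|_{BC^1_t}\le c_1\delta_1\|z^k\|_{BC^1}$ using only (\ref{qua5}) and then shrink $\delta_1$; the paper instead keeps $\delta_1$ fixed and controls $\|u^k\|_{BC^2}$ by the geometric-series estimate (\ref{uk}) in terms of $\| f \|_{BC^1}+\|\partial_t^2 f\|_{BC}+\|h\|_{BC^2}$, so that the interior contribution to the contraction factor is proportional to the data and is killed by the choice of $\varepsilon_1$ in (\ref{qua24}). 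The paper's route leaves $\delta_1$ and hence Claim~\ref{8} untouched and yields an explicit $\varepsilon_1$; your route is simpler but requires re-running Claim~\ref{8} with the smaller $\delta_1$ and, as $\delta_1\to0$, still leaves the boundary contribution $L_3\varrho_1\|Q\|$ in the contraction constant. Your justification of $L_3\varrho_1\|Q\|<1$ via ``$L_3\le L_4$'' is not established anywhere: $L_3$ and $L_4$ are constants from two different a priori estimates with no stated relation. This step therefore needs either an explicit smallness assumption on $\varrho_1$ (harmless for Theorem~\ref{main}, which makes $\partial_2H_j(\cdot,0)$ small, and in the same spirit as the paper's own tacit requirement that its factor $\varrho_1+\cdots$ in (\ref{qua23}) be below $1$) or the convention that $L_4$ is chosen $\ge L_3$ from the outset. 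Your closing observations --- that convergence is obtained only in $C^1_{per}$ and that the $C^2$ regularity must be recovered by a separate argument --- correctly anticipate the role of Claim~\ref{10}.
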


\begin{subproof}
Set 
$z^{k+1} = u^{k+1} - u^k.$
The function $z^{k+1}$ belongs to $C_{per}^2(\Pi;\bbbr^n)$ and satisfies the  system
\begin{eqnarray}
	& & \partial_ t z^{k+1} + A^k \partial_ x z^{k+1}+ B^kz^{k+1} = f^k(x,t)
	\label{qua21}
\end{eqnarray}
and the boundary conditions  
\begin{equation}\label{qua211}
	\begin{array}{ll}
		z^{k+1}_{j}(0,t)= R_jz^{k+1}(\cdot,t) +  H_j^k(t) - H_j^{k-1}(t), \quad
		1\le j\le m,\\ [1mm]
		z^{k+1}_{j}(1,t)= R_jz^{k+1}(\cdot,t) + H_j^k(t) - H_j^{k-1}(t), \quad m< j\le n,
	\end{array}
\end{equation}
for all $\ t\in \bbbr$, where 
\begin{eqnarray*}
	  f^k  & =& - \left(B^k -   B^{k-1}\right)u^k- \left(A^k - 
	A^{k-1}\right)\partial_ xu^k\\
	 & =& -
	 \left[\left\langle\int_0^1\,\nabla_uB_{ij}\left(x,t,\sigma u^k+(1-\sigma)u^{k-1}\right)\dd\sigma, z^k\right\rangle\right]_{i,j=1}^nu^k\\  [2mm]
	 & & -
	  \left[\left\langle\int_0^1\nabla_uA_{ij}\left(x,t,\sigma u^k+(1-\sigma)u^{k-1}\right)\dd\sigma, z^k\right\rangle
	 \right]_{i,j=1}^n\partial_xu^k\,
\end{eqnarray*}
and 
\begin{eqnarray*}
&&	 H^k_j(t)- H_j^{k-1}(t)=  
 \partial_2H_j(t,0)Q_j(t)z^k(\cdot,t) \\
 &&\qquad\quad+
 Q_j(t)	z_j^k(\cdot,t)\int_0^1\biggl\{
\left[\sigma Q_j(t)u^k(\cdot,t) +(1-\sigma)Q_j(t)u^{k-1}(\cdot,t)  \right]\\
&&\qquad\quad\times
 \int_0^1\partial_2^2H_j\left(t,\sigma_1\left[\sigma Q_j(t)u^k(\cdot,t) +(1-\sigma)Q_j(t)u^{k-1}(\cdot,t)\right] \right)\dd\sigma_1 \biggl\} \dd\sigma.
 \end{eqnarray*}

Our aim  is now to show that the problem (\ref{qua21})--(\ref{qua211}) for $z^{k+1}$ fulfills the conditions of the perturbaion Theorem~\ref{prop3}~$(\bf i)$.
For that, we first derive an upper bound for $u^k$ in terms of $f$ and~$h$.
Accordingly to the proof of Claim \ref{8}, the first iteration $u^1$ satisfies the inequality
$$\|u^1\|_{BC^2(\Pi;\bbbr^n)} \le L_4 \left( \| f \|_{BC^1(\Pi;\bbbr^n)} + \|\partial_t^2 f \|_{BC(\Pi;\bbbr^n)} +  \| h\|_{BC^2(\bbbr;\bbbr^n)}\right),$$
while the second iteration $u^2$ satisfies the following inequality (see (\ref{HQ2}) and (\ref{delta1})):
\begin{eqnarray*} 
	\|u^2\|_{BC^2(\Pi;\bbbr^n)} &\le& L_4( \| f \|_{BC^1(\Pi;\bbbr^n)} 
	+ \|\partial_t^2 f \|_{BC(\Pi;\bbbr^n)} + \|H^1\|_{BC^2(\bbbr;\bbbr^n)}) \\ [2mm]
	&\le& L_4 \left( 1 + (\varrho_1 +  \delta_1\varrho_2\|Q\|)\|Q\|L_4\right)
	\nonumber\\[1mm]
	& &\times\left( \| f \|_{BC^1(\Pi;\bbbr^n)} + \|\partial_t^2 f \|_{BC(\Pi;\bbbr^n)} + \| h\|_{BC^2(\bbbr;\bbbr^n)}\right).
\end{eqnarray*}
Proceeding by induction, assume that the problem (\ref{qua1})--(\ref{qua11})
has  a unique  classical time-periodic solution $u^{k-1}$  belonging to
$C^2_{per}(\Pi,\bbbr^n)$ and satisfying the bound 
\begin{equation}\label{uk-1}
\begin{array}{rcl} 
	\|u^{k-1}\|_{BC^2(\Pi;\bbbr^n)} &\le &L_4\Bigl( 1 + (\varrho_1 + \delta_1\varrho_2\|Q\|)\|Q\|L_4+\dots
	\\ [2mm]&&\hskip-5mm
	+\left[(\varrho_1 + \delta_1\varrho_2\|Q\|)\|Q\|L_4\right]^{k-2}  \Bigr) 
	\\ [2mm]
	&& \hskip-5mm \times\left(\| f \|_{BC^1(\Pi;\bbbr^n)} + \|\partial_t^2 f \|_{BC(\Pi;\bbbr^n)}  + \| h\|_{BC^2(\bbbr;\bbbr^n)}\right).
\end{array}
\end{equation}
and, hence, the bound (\ref{qua5}).
Using (\ref{HQ2}), (\ref{delta1}), (\ref{qua5}), and (\ref{uk-1}),  we derive the following  estimate for  $u^{k}$:
\begin{equation} \label{uk}
	\begin{array}{ll}
		\|u^{k}\|_{BC^2(\Pi;\bbbr^n)} \le 
		L_4\left( \| f \|_{BC^1(\Pi;\bbbr^n)} + \|\partial_t^2 f \|_{BC(\Pi;\bbbr^n)}  + 
		\|H^{k-1}\|_{BC^2(\bbbr;\bbbr^n)}\right)  \\[2mm]
		\qquad\le L_4 \left( 1 + (\varrho_1 + \delta_1\varrho_2\|Q\|)\|Q\|L_4+\dots+\left[(\varrho_1 + \delta_1\varrho_2\|Q\|)\|Q\|L_4\right]^{k-1} \right)  \\[3mm]
		\qquad	\quad\times  \left( \| f \|_{BC^1(\Pi;\bbbr^n)} + \|\partial_t^2 f \|_{BC(\Pi;\bbbr^n)}  + \| h\|_{BC^2(\bbbr;\bbbr^n)} \right) \\[2mm]
		\qquad\le\displaystyle \frac{L_4}{1 -(\varrho_1 + \delta_1\varrho_2\|Q\|)\|Q\|L_4}\\[4mm]
		\qquad	\quad\times
		\left( \| f \|_{BC^1(\Pi;\bbbr^n)} + \|\partial_t^2 f \|_{BC(\Pi;\bbbr^n)}  + \| h\|_{BC^2(\bbbr;\bbbr^n)} \right).
	\end{array}
\end{equation}
It follows that
\begin{equation}\label{qua22}
	\begin{array}{ll}
		\|f^{k}\|_{BC^1(\Pi;\bbbr^n)} \\[3mm] 
		\qquad\le N_1 \left(\|u^k\|_{BC^1(\Pi;\bbbr^n)} + \|\partial_{x} u^k\|_{BC^1(\Pi;\bbbr^n)}\right)\|z^k\|_{BC^1(\Pi;\bbbr^n)} \\[3mm]
		\qquad \le  \displaystyle \frac{N_1 L_4}{1 -(\varrho_1 +  \varrho_2\delta_1)\|Q\|L_4} \|z^k\|_{BC^1(\Pi;\bbbr^n)} \\[4mm]
		\qquad\quad \times \left(\| f \|_{BC^1(\Pi;\bbbr^n)} + \|\partial_t^2 f \|_{BC(\Pi;\bbbr^n)} 
		+ \| h\|_{BC^2(\bbbr;\bbbr^n)} \right), \\[4mm]
		\|H^k- H^{k-1}\|_{BC^1(\bbbr;\bbbr^n)} 
		\le\displaystyle   \|z^k\|_{BC^1_t(\Pi;\bbbr^n)}
\displaystyle			\biggl(\varrho_1 + \frac{N_2 L_4}{1 -(\varrho_1 +  \varrho_2\delta_1)\|Q\|L_4} \\[4mm]
\qquad \quad
		\times	\left( \| f \|_{BC^1(\Pi;\bbbr^n)} + \|\partial_t^2 f \|_{BC(\Pi;\bbbr^n)}  + \| h\|_{BC^2(\bbbr;\bbbr^n)} \right)	\biggr) ,
	\end{array}
\end{equation}
where the constants $N_1$ and $N_2$ depend 
on $A$, $B$,  $H$, and $\delta_1$ but not on $z^k$.

Applying now the  estimate (\ref{est21p}) to
$z^{k+1}$ and using (\ref{qua22}), we derive  the
inequality
\begin{eqnarray}\label{qua23}
	\begin{array}{ll}
		\|z^{k+1}\|_{BC^1(\Pi;\bbbr^n)} \le L_3\left(\| f^k \|_{BC_t^1(\Pi;\bbbr^n)} + 
		\|H^k -H^{k-1} \|_{BC^1(\bbbr;\bbbr^n)} \right)  \\[3mm]
		\qquad \le\displaystyle \|z^k\|_{BC^1(\Pi;\bbbr^n)}\biggl(\varrho_1+\frac{L_3L_4(N_1 + N_2)}{1 -(\varrho_1 +  \varrho_2\delta_1)\|Q\|L_4}  \\[4mm]
		\qquad\times\left( \| f \|_{BC^1(\Pi;\bbbr^n)} + \|\partial_t^2 f \|_{BC(\Pi;\bbbr^n)}  + \|  h\|_{BC^2(\bbbr;\bbbr^n)} \right)
		\biggr).
	\end{array}
\end{eqnarray}
Set
\begin{equation}\label{qua24}
	\varepsilon_1=\min\left\{\frac{\delta_1}{L_4}\left[1-\left(\varrho_1 +  \delta_1\varrho_2\|Q\|\right)\|Q\|L_4\right]
	, \frac{(1-\varrho_1)\left[1 -(\varrho_1 +  \varrho_2\delta_1)\|Q\|L_4\right]}{L_3L_4(N_1 + N_2)}\right\}.
\end{equation}
If
$  \| f \|_{BC^1(\Pi;\bbbr^n)} + \|\partial_t^2 f \|_{BC(\Pi;\bbbr^n)}  +  \|h\|_{BC^2} < \varepsilon_1,$
then,  due to (\ref{qua23}), the sequence $(z^k)$ is contracting and, hence,  tends to zero in 
$C^1_{per}(\Pi;\bbbr^n).$

Consequently,
the sequence $(u^k)$ converges to a function $u^*$ in $C^1_{per}(\Pi;\bbbr^n)$. It follows that
$u^*$ is a classical solution to the problem 
(\ref{1}), (\ref{2}).
The proof of the claim is complete.
\end{subproof}
		
\begin{claim}\label{10}
	There exist  positive reals  $\varepsilon \le \varepsilon_1$ and $\delta\le\delta_1 $ such that, if 
	$\varrho_1<\varepsilon$ and
	$$\| f \|_{BC^1(\Pi;\bbbr^n)} + \|\partial_t^2 f \|_{BC(\Pi;\bbbr^n)}  + \|h\|_{BC^2(\bbbr;\bbbr^n)}\le \varepsilon,$$
	then
	the solution $u^*$ belongs to $C^2_{per}(\Pi;\bbbr^n)$ and satisfies the estimate
	\begin{eqnarray}
		\| u^*\|_{BC^2(\Pi;\bbbr^n)} \le \delta.
		\label{*1**}
	\end{eqnarray}
\end{claim}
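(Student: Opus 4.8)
The plan is to lift the $C^1_{per}$‑convergence of the iteration $(u^k)$ from Claim~\ref{9} to $C^2_{per}$‑convergence, by running the contraction argument of Claim~\ref{9} once more, this time in the norm of $BC^2(\Pi;\bbbr^n)$. As soon as $(u^k)$ is known to be Cauchy in $C^2_{per}(\Pi;\bbbr^n)$, its $C^2$‑limit must agree with the $C^1_{per}$‑limit $u^*$ produced in Claim~\ref{9} (since $C^2$‑convergence implies $C^1$‑convergence and $C^1$‑limits are unique), so that $u^*\in C^2_{per}(\Pi;\bbbr^n)$; the bound (\ref{*1**}) will then follow by letting $k\to\infty$ in the uniform a priori estimate (\ref{uk}) and choosing $\varepsilon$ small enough in terms of $\delta_1$.

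To implement this, I would apply the a priori estimate (\ref{est22p}) of Theorem~\ref{prop3}~${\bf (ii)}$ to $z^{k+1}=u^{k+1}-u^k$, which solves the linear problem (\ref{qua21})--(\ref{qua211}); this is admissible because, by (\ref{eps1}), the coefficients $A^k$, $B^k$ of that problem stay in the $\gamma_1$‑ball of $A^0$, $B^0$. It yields
\[
\|z^{k+1}\|_{BC^2(\Pi;\bbbr^n)}\le L_4\left(\|f^k\|_{BC^1(\Pi;\bbbr^n)}+\|\partial_t^2 f^k\|_{BC(\Pi;\bbbr^n)}+\|H^k-H^{k-1}\|_{BC^2(\bbbr;\bbbr^n)}\right).
\]
The substance of the proof is to bound the right‑hand side, in the spirit of (\ref{qua22}) but now in the stronger norms, by $\|z^k\|_{BC^2(\Pi;\bbbr^n)}$ times a factor of the form $\varrho_1+\mathrm{const}\cdot\varepsilon$; concretely, to show $\|f^k\|_{BC^1}+\|\partial_t^2 f^k\|_{BC}\le N_3\,\varepsilon\,\|z^k\|_{BC^2}$ and $\|H^k-H^{k-1}\|_{BC^2}\le(\varrho_1+N_4\,\varepsilon)\,\|z^k\|_{BC^2}$, with $N_3,N_4$ depending on $A$, $B$, $H$, $Q$, $\delta_1$ but not on $k$. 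For $f^k=-(A^k-A^{k-1})\partial_x u^k-(B^k-B^{k-1})u^k$ one uses that $A^k-A^{k-1}$ and $B^k-B^{k-1}$ equal $z^k$ up to left multiplication by bounded matrices built by averaging $\nabla_u A$ and $\nabla_u B$, so that their $BC^1$‑norms (and, together with ${\bf (A1)}$ and the uniform bound $\|u^k\|_{BC^2}\le\mathrm{const}\cdot\varepsilon$ from (\ref{uk}), their $BC^2_t$‑norms) are dominated by $\|z^k\|_{BC^2}$; for the pure second time derivative one additionally invokes the differential equation (\ref{qua1}) for $u^k$ and the smoothing estimate (\ref{ots4}) in order to rewrite the genuinely second‑order contributions through quantities bounded by $\|u^k\|_{BC^2}$. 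For $H^k-H^{k-1}$ one differentiates its explicit formula from the proof of Claim~\ref{9} twice in $t$, using ${\bf (A2)}$ to express $\tfrac{d^2}{dt^2}[Q_j(t)w(\cdot,t)]$ through the operators $Q_{ij}(t)$ and ${\bf (A3)}$ together with $\|u^k\|_{BC^2},\|u^{k-1}\|_{BC^2}\le\delta_1$; the linear term $\partial_2H_j(t,0)Q_j(t)z^k(\cdot,t)$ supplies the factor $\varrho_1$ and the quadratic remainder supplies the factor $N_4\varepsilon$.

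Granting these bounds, I would fix $\varepsilon\le\varepsilon_1$ so small that, under the standing hypotheses $\varrho_1<\varepsilon$ and $\|f\|_{BC^1(\Pi;\bbbr^n)}+\|\partial_t^2 f\|_{BC(\Pi;\bbbr^n)}+\|h\|_{BC^2(\bbbr;\bbbr^n)}\le\varepsilon$, the number $\theta:=L_4(\varrho_1+(N_3+N_4)\varepsilon)$ is strictly less than $1$. Then $\|z^{k+1}\|_{BC^2(\Pi;\bbbr^n)}\le\theta\,\|z^k\|_{BC^2(\Pi;\bbbr^n)}$ for all $k\ge1$, so $(u^k)$ is Cauchy in $C^2_{per}(\Pi;\bbbr^n)$ and its limit is $u^*$, whence $u^*\in C^2_{per}(\Pi;\bbbr^n)$. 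Passing to the limit $k\to\infty$ in (\ref{uk}) gives $\|u^*\|_{BC^2(\Pi;\bbbr^n)}\le\big(L_4/(1-(\varrho_1+\delta_1\varrho_2\|Q\|)\|Q\|L_4)\big)\varepsilon$; after possibly shrinking $\varepsilon$ further so that this right‑hand side is at most $\delta_1$, I would set $\delta$ equal to it, which gives (\ref{*1**}) and $\delta\le\delta_1$ at once. The step I expect to be the main obstacle is the estimate of $\|\partial_t^2 f^k\|_{BC(\Pi;\bbbr^n)}$: a naive double $t$‑differentiation of the product $(A^k-A^{k-1})\partial_x u^k$ produces the third‑order derivative $\partial_x\partial_t^2 u^k$, which is not controlled by the $C^2$‑bound on $u^k$, so one must first use the equation (\ref{qua1}) and the smoothing built into $\widetilde D\widetilde C$ and $\widetilde D^2$ (cf.\ (\ref{ots4})) to trade such derivatives for data that are uniformly $O(\varepsilon)$.
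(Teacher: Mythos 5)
Your overall strategy---upgrade the $C^1$-convergence of Claim~\ref{9} to $C^2$-convergence by showing that $z^{k+1}=u^{k+1}-u^k$ contracts in the $BC^2$-norm via the a priori estimate (\ref{est22p})---runs into a genuine loss-of-derivatives obstruction that your proposal names but does not actually resolve. To apply (\ref{est22p}) to $z^{k+1}$ you must control $\|\partial_t^2 f^k\|_{BC}$, and $f^k$ contains the factor $\partial_x u^k$; since $u^k$ is only known to lie in $C^2_{per}$, the quantity $\partial_t^2\partial_x u^k$ is a \emph{third} derivative that need not exist, and even formally it is not dominated by $\|u^k\|_{BC^2}\le\delta_1$ or by $\|z^k\|_{BC^2}$. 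Your suggested repair---substituting $\partial_x u^k=(A^{k-1})^{-1}(f-\partial_t u^k-B^{k-1}u^k)$ from (\ref{qua1}) and invoking the smoothing estimate (\ref{ots4})---does not close the gap: the substitution trades $\partial_x u^k$ for $\partial_t u^k$, so two further $t$-differentiations produce $\partial_t^3 u^k$, which is the same uncontrolled object; and (\ref{ots4}) bounds $\partial_t^2$ of $(DC+D^2)u$ by $\|u\|_{BC^1_t}$, which says nothing about third derivatives of the iterates themselves. Consequently the claimed bound $\|f^k\|_{BC^1}+\|\partial_t^2 f^k\|_{BC}\le N_3\varepsilon\|z^k\|_{BC^2}$ cannot be established by the route you describe, and the contraction factor $\theta<1$ in $BC^2$ is not obtained.

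The paper avoids this entirely by never seeking a contraction of the full iteration map in $BC^2$. Instead it differentiates the scheme (\ref{qua1})--(\ref{qua11}) twice in $t$, writes the resulting problem for $w^{k+1}=\partial_t^2u^{k+1}$ in integrated operator form (\ref{qua36})--(\ref{oper6*}), and applies a linear fiber contraction principle (Lemma~\ref{fiber}): one only needs (a) a uniform bound $\|{\cal A}(k)\|\le c<1$ for the affine fiber map acting on $w$, which follows from the smallness of ${\cal G}_2(k)$ and ${\cal H}_2(k)$ in (\ref{qua350}), (\ref{qua355}) and from $\varrho_1<\varepsilon$, and (b) convergence of the inhomogeneous term $X^k$, which depends only on the lower-order quantities $u^k,v^k$ already convergent by Claim~\ref{9} (the smoothing of $D(k)C(k)+D(k)^2$ is used precisely to show $X^k$ does not depend on $w^{k+1}$). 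This decoupling of ``contraction in the top derivative'' from ``convergence of the lower-order data'' is the missing idea; without it, or some equivalent device (e.g.\ a Nash--Moser-type scheme), the direct $BC^2$-contraction you propose cannot be carried out under the stated hypotheses, which include dissipativity conditions (\ref{G_i}) only up to $i=2$ and regularity of the coefficients only up to second order.
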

\begin{subproof}
	It is sufficient  to show that the sequence $(u^k)$ converges in $C^2_{per}(\Pi;\bbbr^n)$.
First show that the sequence 
$(\partial_tu^{k+1})$
converges in $BC^1_t(\Pi;\bbbr^n) \cap C_{per}(\Pi;\bbbr^n)$. With this aim, we
differentiate the system (\ref{qua1})--(\ref{qua11})
with respect to $t$ and  write down the resulting system with respect to $v^{k+1}=  \partial_tu^{k+1}$, namely
\begin{equation}\label{qua32}
	\partial_t v^{k+1} + A^k\partial_x v^{k+1} 
	+ B^{1}(x,t,u^k)v^{k+1}=g^{1k}(x,t,v^k)v^{k+1}+g^{2k}(x,t,v^k),
\end{equation}
\begin{equation}\label{qua33}
	\begin{array}{rcl}
		v_{j}^{k+1}(0,t)&=& R_jv^{k+1}(\cdot,t) + h^\prime_j(t) \\
		&&+ 
		\displaystyle\frac{d}{dt}\left[  \partial_2H_j(t,0)Q_j(t) u^k(\cdot,t)+ p_j^k(t)\right], \ 1\le j\le m,\\ [3mm]
		v_{j}^{k+1}(1,t)&=& R_jv^{k+1}(\cdot,t) + h^\prime_j(t)    \\
		&&\displaystyle+\frac{d}{dt}\left[ \partial_2H(t,0)Q(t) u^k(\cdot,t)+ p_j^k(t)\right], \ m< j\le n,
	\end{array}
\end{equation}
where
\begin{equation} \label{qua333}
	\begin{array}{rcl}
	B^{1}(x,t,u^k)&=&  B^k -  \partial_2A^k (A^k)^{-1}, \\ [2mm]
	g^{1k}(x,t,v^k) &=& \left(\partial_3 A^k v^k \right)(A^k)^{-1},
	\nonumber\\ [2mm]
	g^{2k}(x,t,v^k) &=& -\partial_2B^ku^{k+1}+\partial_2A^k\,(A^k)^{-1} B^ku^{k+1} - 
	\left(\partial_3 B^k  v^k\right)u^{k+1} \nonumber\\ [2mm]
	&&  +  \partial_t f - \partial_2A^k \,(A^k)^{-1}f  
	+ \left(\partial_3 A^k\, v^k\right) (A^k)^{-1}(B^k u^{k+1} -f), \\ [2mm]
	p_j^k(t)&=&\displaystyle \left[Q_j(t)	u^k(\cdot,t)\right]^2 \int_0^1
	\d_2^2H_j(t,\sigma Q_j(t)u^k(\cdot,t))
	\dd\sigma,\\ [3mm]
	\partial_3 A^k v^k&=&\diag\left(\langle\nabla_uA_{1}(x,t,u^k),v^k\rangle,\dots,\langle\nabla_uA_{n}(x,t,u^k),v^k\rangle\right),
	\\ [3mm]
	\partial_3 B^k  v^k&=&\left[\langle\nabla_uB_{ij}(x,t,u^k),v^k\rangle\right]_{i,j=1}^n.
	\end{array}
\end{equation} 
Similarly to $B^k$, we will write also $B^{1k}$ for $B^{1}(x,t,u^k)$.
It is evident that the sequence 
$(v^{k+1})$ of $T$-periodic solutions to the problem (\ref{qua32})--(\ref{qua33})
converges in $BC^1_t(\Pi;\bbbr^n)$ if and only if the sequence $(w^{k+1})$ with $w^{k+1}=\partial_tv^{k+1}$
converges in $C_{per}(\Pi;\bbbr^n)$.  We, therefore, will prove the convergence of 
$w^{k+1}$. To this end,  
by differentiating the system (\ref{qua32}) in a distributional sense in $t$ and the boundary 
conditions (\ref{qua33}) pointwise in $t$  we  get the following problem with respect to
$w^{k+1}$:
\begin{eqnarray}
	\partial_t w^{k+1} + A^k\partial_x w^{k+1} +B^{2}(x,t,u^k)w^{k+1}
	={\cal G}_{1}(k)w^{k+1}+{\cal G}_{2}(k)w^{k}+g^{3k}, \label{qua34}
\end{eqnarray}
\begin{equation}\label{qua35}
	\begin{array}{rcl}
		w_{j}^{k+1}(0,t)&=& R_jw^{k+1}(\cdot,t)+   h^{\prime\prime}_j(t)\\
		&&+ 
		\displaystyle\frac{d^2}{dt^2}\left[ \partial_2H_j(t,0)Q_j(t) u^k(\cdot,t)+ p_j^k(t)\right], \ 1\le j\le m,\\ [3mm]
		w_{j}^{k+1}(1,t)&=& R_jw^{k+1}(\cdot,t)+   h^{\prime\prime}_j(t)\\
		&&+ 
		\displaystyle\frac{d^2}{dt^2}\left[ \partial_2H_j(t,0)Q_j(t) u^k(\cdot,t)+ p_j^k(t)\right], \ m< j\le n,
	\end{array}
\end{equation}
where
\begin{eqnarray*} 
	B^{2}(x,t,u^k)&=&B^{1k}-\partial_2A^k\,(A^k)^{-1} = B^{k}-2\partial_2A^k\,(A^k)^{-1},\\ 
	 g^{3k}(x,t,v^k)&=&\Bigl( \partial_2g^{1k} - \partial_3 B^{1k}v^k  + 
	\left[\left(\partial_3 A^{k}v^k\right) (A^k)^{-1} + \partial_2 A^{k} (A^k)^{-1}\right](B^{1k} -  g^{1k}),\\
	&&- \partial_2B^{1} \Bigl) v^{k+1}  
	+ \partial_2g^{2k} - \left( \partial_3 A^{k}v^k \right)(A^k)^{-1} g^{2k} - \partial_2A^{k} (A^k)^{-1} g^{2k},\\
	\partial_3 B^{1}v^k &=&\left[\left\langle\nabla_uB_{ij}^1(x,t,u^k),v^k\right\rangle\right]_{i,j=1}^n,
\end{eqnarray*}  
and the operators
${\cal G}_{1}(k),{\cal G}_{2}(k)\in {\cal L}(C_{per}(\Pi;\bbbr^n))$
are defined by
\begin{equation}\label{1est}
	\begin{array}{rcl} 
	\left[{\cal G}_{1}(k)w^{k+1}\right](x,t) &=&\left[g^{1k} + 
	\left(\partial_3 A^k v^k\right)(A^k)^{-1}\right] w^{k+1},\\ [2mm]
	\left[{\cal G}_{2}(k)w^{k}\right](x,t)&=&\left(\partial_3 A^k w^k\right)(A^k)^{-1} v^{k+1}+\left(\partial_3 A^k\, w^k\right) (A^k)^{-1}(B^k u^{k+1} -f). 
\end{array} 
\end{equation}

On the account of (\ref{qua333}) and (\ref{1est}), the following estimate is true: 
\begin{eqnarray} \label{qua350}
	\| {\cal G}_2(k)w^k\|_{BC(\Pi;\bbbr^n)} \le L_5\delta_1\|w^k\|_{BC(\Pi;\bbbr^n)},
\end{eqnarray} 
where constant $L_5$ does not depend on $w^k.$ 
Because of (\ref{R'}), we have
\begin{equation}\label{2t}
\displaystyle  \frac{d^2}{dt^2}Q_j(t)u^k(\cdot,t)= \displaystyle Q_{0j}(t) u^k(\cdot,t) 
+Q_{1j}(t) v^k(\cdot,t) + Q_{2j}(t) w^k(\cdot,t).
\end{equation}

For every $j\le n$, decompose
$$
 \displaystyle\frac{d^2}{dt^2}\left[ \partial_2H_j(t,0)Q_j(t) u^k(\cdot,t)+ p_j^k(t)\right] 
	= {\cal H}_{1j}(k)  + {\cal H}_{2j}(k) w^k
$$
into two summands, where the second part ${\cal H}_{2j}(k)w^k$ consists of all summands with~$w^k$.
Taking into account (\ref{2t}), the summand $ {\cal H}_{2j}(k)w$ allows the representation
$$
{\cal H}_{2j}(k)w^k = \left[\partial_2H_j(t,0)  + 
2Q_j(t)	u^k(\cdot,t) \int_0^1
\d_2^2H_j(t,\sigma Q_j(t)u^k(\cdot,t))\dd\sigma\right]
Q_{2j}(t)w^k 
$$ 
and, therefore,  satisfies the estimate 
\begin{eqnarray} \label{qua355}
	\| {\cal H}_2(k)w^k\|_{BC(\Pi;\bbbr^n)} \le L_6(\varrho_1+2\delta_1\varrho_2\|Q\|)\|w^k\|_{BC(\Pi;\bbbr^n)}
\end{eqnarray} 
with a constant $L_6$ independent of $w^k.$ 

Evidently,
the function $w^{k+1}$ satisfies (\ref{qua34}) in a distributional sense
and (\ref{qua35}) pointwise if and only if it satisfies the following
operator equation obtained after the integration of (\ref{qua34}) along characteristic curves:
\begin{equation} \label{qua36}
	\begin{array}{rcl} 
		w^{k+1} &=& C(k)w^{k+1} + D(k) w^{k+1} 
		+ 		P(k)\left(h^{\prime\prime}+ {\cal H}_1(k)  + {\cal H}_2(k) w^k \right)\\ [2mm] &&
		+ S(k)\left({\cal G}_{1}(k)w^{k+1}+{\cal G}_{2}(k)w^{k}+g^{3k} \right),
	\end{array} 
\end{equation}
where the operators
 $C(k),D(k), S(k), P(k)\in {\cal L}(C_{per}(\Pi;\bbbr^n))$ are defined by 
\begin{eqnarray*} 
	\begin{array}{rcl}
			 \displaystyle [C(k)u]_j(x,t)&=& c_{jk}^2(x_j,x,t)R_j u(\cdot,\omega_j^k(x_j,x,t))),\nonumber
			 \\[2mm]
			 	[P(k)h]_j(x,t)&=& c_{jk}^2(x_j,x,t)h_j(\omega_j(x_j,x,t))),
		\\[2mm]
		  \displaystyle [D(k)u]_j(x,t)&=&\displaystyle 
		-\int_{x_j}^{x}d_{jk}^2(\xi,x,t)\sum_{i\neq j} b_{ji}^{2k}(\xi,\omega_j^k(\xi,x,t))u_i(\xi,\omega_j^k(\xi,x,t)) \dd\xi,\\[2mm]
			 \displaystyle [S(k)f]_j(x,t) &= &\displaystyle \int_{x_j}^{x}d_{jk}^2(\xi,x,t)f_j(\xi,\omega_j^k(\xi,x,t)) \dd\xi.
		\nonumber
	\end{array}
\end{eqnarray*}
Here the functions $b_{ji}^{2k}$ are elements of the matrix $B^{2k}$, while  the
functions $\omega_j^k, c_{jk}^l, d_{jk}^l$ are defined by (\ref{char}) and (\ref{cd}) with
$a_j$ and $b_{jj}$ replaced by $A_j^k$ and $b_{jj}^{k}$, respectively.
Note that $b_{jj}^{2k} = b_{jj}^{1k} - \partial_2 A_j^k (A_j^k)^{-1} = b_{jj}^{k} - 2 \partial_2 A_j^k (A_j^k)^{-1}.$ 
Analogously, we write $G_{0}(k), G_1(k),$ and $ G_2(k) $ for the operators $G_{0}, G_1,$ and $ G_2 $
defined by   (\ref{G_i}) where~$\omega_j$ is replaced by $\omega_j^k$.

After an iteration of (\ref{qua36}), we get the equation
\begin{eqnarray} \label{qua37}
	\begin{array}{rcl}
		w^{k+1} & = & C(k)w^{k+1} + \left(D(k) C(k)+ D^2(k)\right)w^{k+1} \\[2mm]
		&&+ (I+D(k)) S(k)\left({\cal G}_{1}(k)w^{k+1}+{\cal G}_{2}(k)w^{k}+g^{3k}\right)  \\[2mm]
		& & +(I+D(k))P(k)\left(h^{\prime\prime}+
		{\cal H}_1(k)  + {\cal H}_2(k) w^k\right),
	\end{array} 
\end{eqnarray}
Now we intend to show that for all sufficiently small $\delta\le \delta_1$
(and, hence for all sufficiently small $u^k$, see (\ref{qua5}), the last system can uniquelly  be solved with respect to $w^{k+1}$.
In other words, the system (\ref{qua37}) can be written in the following  
form:
\begin{equation} \label{oper5*}
	w^{k+1}  = {\cal A}(k)w^{k}+X^k, 
\end{equation}
where, for each $k$, we have ${\cal A}(k)\in{\cal L}(C_{per}(\Pi;\bbbr^n))$, $X^k\in C_{per}(\Pi;\bbbr^n)$, and
\begin{equation} \label{oper6*}
	\begin{array}{rcl}
		{\cal A}(k)w & =& 
		\left[I-C(k)-(I+D(k)) S(k){\cal G}_{1}(k)\right]^{-1} \\[2mm]
		&&\times(I+D(k)) S(k) {\cal G}_{2}(k)w
		+ (I+D(k)) P(k) {\cal H}_2(k) w,\\ [2mm]
		X^k&=&\left[I-C(k)-(I+D(k)) S(k){\cal G}_{1}(k)\right]^{-1} \Bigl(\left(D(k) C(k)+ D^2(k)\right)w^{k+1} \\[2mm]
		&&+ (I+D(k))S(k) g^{3k}
		+ (I+D(k))P(k) \left(h^{\prime\prime}+ {\cal H}_1(k)\right)\Bigl).
	\end{array} 
\end{equation}
To prove the equivalence of (\ref{qua37}) and  (\ref{oper5*}), it suffices to show that, for any $k\ge 1$,
the operator $I-C(k)-(I+D(k)) S(k){\cal G}_{1}(k)$ is invertable and, moreover, the inverse operator 
is bounded uniformly in $k \in \Z$. 
We first prove the invertibility of the operators  $I-C(k)$ in  $C_{per}(\Pi, \bbbr^n)$. To this end, we prove the
unique solvability of the system 
\begin{equation}\label{qua38}
	v_j(x,t)=c_{jk}^2(x_j,x,t)R_jv(\cdot,\omega_j^k(x_j,x,t))+g_j(x,t), \quad j\le n,
\end{equation}
in $C_{per}(\Pi,\bbbr^n)$ for any $g\in C_{per}(\Pi,\bbbr^n)$. 
Putting $x=0$ for $m<j\le n$ and $x=1$ for $1\le j\le m$ in (\ref{qua38}), 
we get the following system of $n$ equations with respect to  
$z(t) = (v_1(1,t),\dots, v_m(1,t), v_{m+1}(0,t),\dots,v_n(0,t))$:
\begin{equation}
	\label{1*}
	\begin{array}{rcl}
z_j(t)&=& \displaystyle c_{jk}^2(x_j,1 - x_j,t)\sum_{i=1}^nr_{ji}z_i(\omega_j^k(x_j,1 - x_j,t)) + g_j(1-x_j,t)\\
&=&[G_2(k) z]_j(t) + g_j(1-x_j,t), \quad  j\le n.
\end{array}
\end{equation}
Note that the system (\ref{qua38}) is uniquely solvable with respect to $v\in C_{per}(\Pi;\bbbr^n)$ if and only if  the system (\ref{1*}) is uniquely solvable with respect to $z\in C_{per}(\bbbr;\bbbr^n)$.

From the estimate
(\ref{qua5}) follows the bound
$\|G_2(k)\|_{{\cal L}(BC(\bbbr,\bbbr^n))}<c_0 < 1$, which uniform in $k\in\bbbn$ accordingly  to the proof of Theorem \ref{prop3}.
Hence,  the operator
$I-C(k)$ is invertible and, moreover, fulfills the estimate 
$$
\|(I-C(k))^{-1}\|_{{\cal L}(BC(\Pi;\bbbr^n))}\le 1+(1-c_0)^{-1}\|C(k)\|_{{\cal L}(BC(\Pi;\bbbr^n))}.
$$
Since the set of all invertible 
operators having bounded inverse is open, it suffices to show that
the operator $(I+D(k)) S(k){\cal G}_{1}(k)$ is  sufficiently small 
whenever $\delta_1$ is sufficiently small. As  Claim \ref{9}
is true with $\delta_2$
in place of $\delta_1$
for any $\delta_2\le\delta_1$,   for any $\sigma>0$ there is 
$\delta_2$ such that for all $u^k$ fulfilling
(\ref{qua5})  with $\delta_2$
in place of $\delta_1$, we have
$\|{\cal G}_1(k)\|_{{\cal L}(BC(\Pi;\bbbr^n))}\le\sigma$ for all $k$. 
Moreover,  the operators $D(k), P(k)$ and  
$S(k)$ are bounded uniformly in $k$.
Consequently, 
for any $f$ and $h$ satisfying (\ref{qua4}), the operator 
$I-C(k)-(I+D(k)) S(k){\cal G}_{1}(k)$
is invertible and the inverse is  bounded uniformly in $k$.
The desired conclusion
about the equivalence of (\ref{qua37}) and (\ref{oper5*}) follows.

To finish the proof of this claim,  it suffices to show that
the sequence $w^{k}$
converges in $C_{per}(\Pi;\bbbr^2)$ as $k\to\infty$. To this end,   we apply  to  the equation (\ref{oper5*}) a linear version of the 
fiber contraction principle (see \cite[Section 1.11.3]{Chicone}  ) or,  more precisely, a
  version of  it given by  \cite[Lemma A.1]{Hopf} and written in terms of our problem as follows.

\begin{lemma} \label{fiber}
	Let the following conditions be fulfilled:
	
	$(i)$
	$
	{\cal A}(k)u$ converges in $ C_{per}(\Pi;\bbbr^n)$ as $ k\to\infty
	$
	for all $u\in C_{per}(\Pi;\bbbr^2)$,
	
	$(ii)$
	there exists $c<1$  such that 
	$
	\|{\cal A}(k)\|_{{\cal L}(BC(\Pi;\bbbr^2))}\le c,
	$
	for all $k\in \bbbn$,
	
	$(iii)$
	$
	X^k \mbox{ converges in } C_{per}(\Pi;\bbbr^n) \mbox{ as } k\to\infty.
	$
	\\
	Then the sequence $w^{k}$ given by (\ref{oper5*})
	converges in $C_{per}(\Pi;\bbbr^n)$ as $k\to\infty$.
\end{lemma}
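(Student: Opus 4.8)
The plan is to recognize the affine recursion $w^{k+1}={\cal A}(k)w^{k}+X^{k}$ as a perturbation of a genuinely contractive fixed-point problem on $C_{per}(\Pi;\bbbr^n)$ and then run the standard perturbed-contraction argument; this is, in essence, \cite[Lemma A.1]{Hopf}, so one may also simply quote it, but I sketch the argument for completeness.

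First I would construct the limiting objects. Since ${\cal A}(k)u$ converges for every $u\in C_{per}(\Pi;\bbbr^n)$, set ${\cal A}(\infty)u:=\lim_{k\to\infty}{\cal A}(k)u$. This defines a linear operator, and by $(ii)$ it obeys $\|{\cal A}(\infty)u\|_{BC(\Pi;\bbbr^n)}=\lim_k\|{\cal A}(k)u\|_{BC(\Pi;\bbbr^n)}\le c\,\|u\|_{BC(\Pi;\bbbr^n)}$; hence ${\cal A}(\infty)\in{\cal L}(BC(\Pi;\bbbr^n))$ with norm $\le c<1$, and it maps $C_{per}(\Pi;\bbbr^n)$ into itself because the latter is a closed subspace of $BC(\Pi;\bbbr^n)$. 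By $(iii)$ put $X^{\infty}:=\lim_k X^{k}\in C_{per}(\Pi;\bbbr^n)$. Because $\|{\cal A}(\infty)\|<1$, the Neumann series shows that $I-{\cal A}(\infty)$ is boundedly invertible on $C_{per}(\Pi;\bbbr^n)$ with $\|(I-{\cal A}(\infty))^{-1}\|\le(1-c)^{-1}$, and I take $w^{\infty}:=(I-{\cal A}(\infty))^{-1}X^{\infty}$, the unique solution of $w={\cal A}(\infty)w+X^{\infty}$. This $w^{\infty}$ will be the limit of $(w^{k})$.

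Next I would estimate the error $e^{k}:=w^{k}-w^{\infty}$. Subtracting the identity $w^{\infty}={\cal A}(\infty)w^{\infty}+X^{\infty}$ from the recursion gives
$$
e^{k+1}={\cal A}(k)e^{k}+\eta^{k},\qquad \eta^{k}:=\bigl[{\cal A}(k)-{\cal A}(\infty)\bigr]w^{\infty}+\bigl(X^{k}-X^{\infty}\bigr).
$$
By the very definition of ${\cal A}(\infty)$ (evaluated at the single vector $w^{\infty}$) together with $(iii)$, we have $\|\eta^{k}\|_{BC(\Pi;\bbbr^n)}\to 0$. Invoking $(ii)$, $\|e^{k+1}\|\le c\,\|e^{k}\|+\|\eta^{k}\|$, whence $\|e^{k+1}\|\le c^{k}\|e^{1}\|+\sum_{j=1}^{k}c^{k-j}\|\eta^{j}\|$. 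The first summand tends to $0$; for the second, fixing $\varepsilon>0$ and $N$ with $\|\eta^{j}\|<\varepsilon$ for $j>N$, one splits the sum at $N$ and lets $k\to\infty$ to obtain $\limsup_{k}\|e^{k}\|\le\varepsilon/(1-c)$. Since $\varepsilon$ is arbitrary, $e^{k}\to 0$, i.e. $w^{k}\to w^{\infty}$ in $C_{per}(\Pi;\bbbr^n)$, which is the assertion.

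There is no serious obstacle here; the one point deserving care is that hypothesis $(i)$ yields only strong (pointwise) convergence ${\cal A}(k)\to{\cal A}(\infty)$ and not convergence in operator norm --- but this suffices, since in the error term $\eta^{k}$ the operators ${\cal A}(k)$ and ${\cal A}(\infty)$ are applied only to the fixed vector $w^{\infty}$.
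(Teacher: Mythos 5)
Your proof is correct and complete; the paper itself does not prove this lemma but simply quotes it as a linear version of the fiber contraction principle (\cite[Lemma A.1]{Hopf}, \cite[Section 1.11.3]{Chicone}), and your argument — passing to the strong limit ${\cal A}(\infty)$, solving the limiting fixed-point equation by a Neumann series, and controlling the error via $\|e^{k+1}\|\le c\|e^k\|+\|\eta^k\|$ with $\eta^k\to0$ — is exactly the standard argument behind that citation. Your closing remark correctly identifies the one delicate point, namely that only strong (not norm) convergence of ${\cal A}(k)$ is available and that this suffices because ${\cal A}(k)-{\cal A}(\infty)$ is only ever applied to the fixed element $w^{\infty}$.
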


To varify the conditions of this lemma, we follow  the same line as in \cite[p. 4204--4205]{jee}.
Thus, since all the operators in the right-hand side of first formula of $(\ref{oper6*})$ do not depend on $w^{k}$ for all $k$, the condition $(i)$ of the lemma easily
follows from Claim~\ref{9}. 

By (\ref{qua350}) and  (\ref{qua355}),
for any $\sigma>0$
there exist $\delta\le\delta_2$ and $\varrho > 0$ 
such that for all $f$ and $h$ satisfying the bound (\ref{qua4}) 
with $\delta$ in place of $\delta_1$ 
 it holds
$\|{\cal G}_2(k)\|_{{\cal L}(BC(\Pi;\bbbr^n))}  + \|{\cal H}_2(k)\|_{{\cal L}(BC(\Pi;\bbbr^n))} \le \sigma$.
Hence, taking into the account that all other operators in the right hand side of 
the first equality in (\ref{oper6*}) are bounded uniformly in $k$ 
whenever $\delta$  is sufficiently small, we state that, for sufficiently small  $\delta$ and  $\varrho_1 $, the
condition $(ii)$ is  fulfilled for all $f$ and $h$ satisfying the bound (\ref{qua4}) with $\delta$ in place of $\delta_1.$

For the remaining property $(iii)$, note that the operators 
$D(k)$ and $C(k)$  depend  neither on $w^k$ nor on 
$v^k$ for all $k$.  Moreover, the operators $D(k) C(k)$ and $D(k)^2$ are smoothing mapping $BC(\Pi;\bbbr^n))$
into $BC^1_t(\Pi;\bbbr^n))$. This entails that the expressions $D(k) C(k)w^{k+1}$ and $D(k)^2 w^{k+1}$, actually, 
do not depend on $w^{k+1}$ but only on $v^{k+1}$.
 Moreover, analogously to (\ref{ots3}),
 the following  estimate is true:
\begin{eqnarray*} 
	\left\|\partial_t\left(D(k)C(k) + D(k)^2\right) w^{k+1}\right\|_{BC(\Pi;\bbbr^n)} \le L_{7} \|v^{k+1}\|_{BC(\Pi;\bbbr^n)}
\end{eqnarray*}
for some $L_{7}$ not depending on $k$.
It follows  that the right hand side of the second formula in (\ref{oper6*}) does not depend on $w^{k}$ for all $k$ and, therefore, the convergence of $X^k$ immediately follows
from Claim \ref{9}.  

Set 
$$\varepsilon =\min\left\{\frac{\delta}{L_4}\left[1-\left(\varrho_1 +  \delta\varrho_2\|Q\|\right)\|Q\|L_4\right]
, \frac{(1-\varrho_1)\left[1 -(\varrho_1 +  \varrho_2\delta)\|Q\|L_4\right]}{L_3L_4(N_1 + N_2)}\right\}.$$
Therefore, 
by assumptions of Claim \ref{10} the sequence $w^{k}$
converges in $C_{per}(\Pi;\bbbr^n)$ as $k\to\infty$.

Now, by  Claim \ref{9}, we conclude that the second derivative in $t$
of $u^*$ exists and that the sequence $\partial^2_tu^k$ converges to $\partial^2_tu^*$
in $C_{per}(\Pi;\bbbr^n)$
as $k\to\infty$.
Differentiating (\ref{qua1}) first in $t$ and then in $x$, we conclude that $u^k$ converges to $u^*$  in $C_{per}^2(\Pi;\bbbr^n)$ as $k\to\infty$. It follows also that the estimate (\ref{*1**}) is fulfilled.
\end{subproof}

\begin{claim}\label{11} Let the conditions  of Claim \ref{10} are fulfilled.
	Then the bounded classical solution $u^*$  to the problem  (\ref{1}), (\ref{2}) 
	is  unique.
\end{claim}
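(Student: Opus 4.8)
The plan is to show that any $T$-periodic classical solution $\hat u\in C^2_{per}(\Pi;\bbbr^n)$ of (\ref{1}), (\ref{2}) with $\|\hat u\|_{BC^2(\Pi;\bbbr^n)}\le\delta$ (this is the uniqueness class of Theorem~\ref{main}) necessarily coincides with $u^*$, by feeding $u^*$ and $\hat u$ into the contraction estimate of Claim~\ref{9} in the roles of two consecutive iterates. As a preliminary step I would note that $\hat u$, being a classical solution of (\ref{1}), (\ref{2}), is simultaneously the unique (by Theorem~\ref{prop3}~${\bf (ii)}$) classical $T$-periodic solution of the linear problem (\ref{1p}), (\ref{2p}) with $\tilde a=A(x,t,\hat u)$, $\tilde b=B(x,t,\hat u)$, $f=F(\cdot,\cdot,0)$, and boundary datum $h_j(t)=H_j(t,Q_j(t)\hat u(\cdot,t))$; since $A,B\in C^2$ and $\|\hat u\|_{BC^2}\le\delta_0$, the coefficients $A(\cdot,\hat u)$, $B(\cdot,\hat u)$ differ from $A(\cdot,0)$, $B(\cdot,0)$ by at most $\gamma_1$ in $BC^2$ (cf.\ (\ref{eps1})). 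Applying the a priori estimate (\ref{est22p}), bounding the boundary datum by means of (\ref{approx}) and (\ref{HQ2}), and rearranging exactly as in the derivation of (\ref{uk}) then shows that $\hat u$ satisfies the bound (\ref{uk}); the same holds for $u^*$.

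Next I would set $w=u^*-\hat u\in C^2_{per}(\Pi;\bbbr^n)$, subtract equations (\ref{1}) and the boundary conditions (\ref{2}) written for $u^*$ and for $\hat u$, and use the fundamental theorem of calculus exactly as in the derivation of (\ref{qua21})--(\ref{qua211}) (now with $u^{k+1},u^k,z^{k+1},z^k$ all replaced by $u^*,\hat u,w,w$). This exhibits $w$ as the classical solution of a linear problem
\[
\partial_t w+A(x,t,u^*)\,\partial_x w+B(x,t,u^*)\,w=f^*,\qquad
w_j(x_j,t)=R_jw(\cdot,t)+H_j\bigl(t,Q_j(t)u^*(\cdot,t)\bigr)-H_j\bigl(t,Q_j(t)\hat u(\cdot,t)\bigr),
\]
in which $f^*$ and the boundary term admit the very integral representations used in Claim~\ref{9}. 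Since $A,B\in C^2$ and $u^*,\hat u\in C^2_{per}$ with $BC^2$-norms $\le\delta_0$, the coefficients $A(\cdot,u^*)$, $B(\cdot,u^*)$ lie in $C^2_{per}$ and differ from $A(\cdot,0)$, $B(\cdot,0)$ by at most $\gamma_1$ in $BC^2$, so Theorem~\ref{prop3}~${\bf (i)}$ applies. Feeding its a priori estimate (\ref{est21p}) into the bounds (\ref{qua22}), (\ref{qua23}) --- carried out verbatim with $w$ in place of $z^k$ and using the bound (\ref{uk}) for $u^*$ --- yields
\[
\|w\|_{BC^1(\Pi;\bbbr^n)}\le\theta\,\|w\|_{BC^1(\Pi;\bbbr^n)},\qquad
\theta=\varrho_1+\frac{L_3L_4(N_1+N_2)}{1-(\varrho_1+\varrho_2\delta_1)\|Q\|L_4}\Bigl(\| f\|_{BC^1(\Pi;\bbbr^n)}+\|\partial_t^2 f\|_{BC(\Pi;\bbbr^n)}+\| h\|_{BC^2(\bbbr;\bbbr^n)}\Bigr).
\]

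Finally I would check that $\theta<1$: this is exactly what the second entry of the minimum defining $\varepsilon_1$ in (\ref{qua24}), together with the hypotheses $\varrho_1<\varepsilon$ and $\| f\|_{BC^1(\Pi;\bbbr^n)}+\|\partial_t^2 f\|_{BC(\Pi;\bbbr^n)}+\| h\|_{BC^2(\bbbr;\bbbr^n)}\le\varepsilon$ of Claim~\ref{10} guarantee (after, if necessary, a harmless further reduction of $\varepsilon$ making the inequality strict). Then $\|w\|_{BC^1(\Pi;\bbbr^n)}=0$, i.e.\ $\hat u=u^*$, which is the asserted uniqueness. I expect the only point needing genuine care to be this strictness $\theta<1$, i.e.\ keeping the smallness thresholds of Claims~\ref{8}--\ref{10} consistent; everything else is a word-for-word reuse of Claim~\ref{9} with the symbol ``$z^k$'' read as $w=u^*-\hat u$. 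An equivalent, more structural formulation: the solution operator $\psi\mapsto u^\psi$ of (\ref{1p}), (\ref{2p}) with $\tilde a=A(\cdot,\psi)$, $\tilde b=B(\cdot,\psi)$, $h_j=H_j(\cdot,Q_j(\cdot)\psi)$ maps the $\delta_1$-ball of $C^2_{per}(\Pi;\bbbr^n)$ --- which is closed, hence complete, in the $C^1_{per}$-topology by Arzel\`a--Ascoli --- into itself (Claim~\ref{8}) and is a $BC^1$-contraction on it (Claim~\ref{9}); its unique fixed point is $u^*$, and any classical solution of (\ref{1}), (\ref{2}) in that ball is such a fixed point.
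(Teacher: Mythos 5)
Your proof is correct and follows essentially the same route as the paper's: both reduce uniqueness to the $BC^1$-contraction estimate of Claim~\ref{9} (i.e.\ Theorem~\ref{prop3}~$(\bf i)$ combined with the bounds (\ref{qua22})--(\ref{qua23})) applied to a difference of solutions, with strictness $\theta<1$ coming from (\ref{qua24}). The only cosmetic difference is that the paper forms the differences $y^{k+1}=u^{k+1}-\tilde u$ between the iterates and the putative second solution and passes to the limit $k\to\infty$, whereas you difference the two exact solutions directly and conclude from $\|w\|_{BC^1(\Pi;\bbbr^n)}\le\theta\,\|w\|_{BC^1(\Pi;\bbbr^n)}$ in one step.
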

\begin{subproof}
Assume,   contrary to the claim, that $\tilde u$ is a classical  solution to the problem 
(\ref{1}), (\ref{2}) which satisfies  the estimate  (\ref{*1**}) and  differs from $u^*$. 
Then functions $\widetilde A(x,t) = A(x,t, \tilde u(x,t))$ and  $\widetilde B(x,t) = B(x,t, \tilde u(x,t))$
satisfy inequalities 
$$\|\widetilde A - A^0\|_{BC^2(\Pi,\bbbr^n)} \le \gamma_1, \quad
\|\widetilde B- B^0\|_{BC^2(\Pi,\bbbr^n)} \le \gamma_1.$$

The difference $\tilde y^{k+1}= u^{k+1}-\tilde u$ belongs to $C^2_{per}(\Pi;\bbbr^n)$  
and satisfies the  system
$$ \partial_ t y^{k+1} + A(x,t,\tilde u) \partial_ xy^{k+1}+ B(x,t,\tilde u) y^{k+1} 
= \tilde f^k(x,t)$$
and the boundary conditions  
\begin{eqnarray*}
	& &  y^{k+1}_{j}(0,t)= R_j y^{k+1}(\cdot,t) +  H_j(t, Q_ju^{k+1}(\cdot,t)) - H_j(t, Q_j\tilde u(\cdot,t)), \
	1\le j\le m,\\ 
	& & y^{k+1}_{j}(1,t)= R_jy^{k+1}(\cdot,t) + H_j(t, Q_ju^{k+1}(\cdot,t)) - H_j(t, Q_j\tilde u(\cdot,t)), \ m< j\le n,
\end{eqnarray*}
for all $\ t\in \bbbr$, where 
$$\tilde f^k(x,t)   =  \left(\widetilde B - B^{k}\right)u^{k+1} + 
\left(\widetilde A - A^{k}\right)\partial_ xu^{k+1}.$$

Analogously to (\ref{qua23}), we derive  the inequality
\begin{eqnarray}\label{qua23u}
	\begin{array}{ll}
		\|y^{k+1}\|_{BC^1(\Pi;\bbbr^n)} \le L_3\left(\| \tilde f^k \|_{BC_t^1(\Pi;\bbbr^n)} + 
		\|  H^{k+1}-\widetilde H \|_{BC^1(\bbbr;\bbbr^n)} \right)  \\[2mm]
\quad	 \qquad	\le\displaystyle
		\displaystyle\|y^k\|_{BC^1(\Pi;\bbbr^n)}\biggl(\varrho_1+\frac{L_3L_4(N_1 + N_2)}{1 -(\varrho_1 +  \varrho_2\delta_1)\|Q\|L_4}  \\[4mm]
	\quad	\qquad\times\left( \| f \|_{BC^1(\Pi;\bbbr^n)} + \|\partial_t^2 f \|_{BC(\Pi;\bbbr^n)}  + \|  h\|_{BC^2(\bbbr;\bbbr^n)} \right)
		\biggr),
	\end{array}
\end{eqnarray}
where $\widetilde H_j(t)=H_j(t, Q_j\tilde u(\cdot,t))$ for all $j\le n$.
If $ \| f \|_{BC^1(\Pi;\bbbr^n)} + \|\partial_t^2 f \|_{BC(\Pi;\bbbr^n)}  + \| h\|_{BC^2(\bbbr;\bbbr^n)}< \varepsilon_1,$ where $\varepsilon_1$ is defined by (\ref{qua24}),
then,  by the inequality (\ref{qua23u}), the sequence $y^k$ is contracting and, hence  tends to zero in  $BC^1(\Pi;\bbbr^n).$	 This means  that $\tilde u = u^*,$ and this is the desired contradiction.  
\end{subproof}
The proof of Theorem \ref{main} is complete.

\end{document}